\def\keywordname{{\bfseries \emph Keywords}}%
\def\keywords#1{\par\addvspace\medskipamount{\rightskip=0pt plus1cm
\def\and{\ifhmode\unskip\nobreak\fi\ $\cdot$
}\noindent\keywordname\enspace\ignorespaces#1\par}}
\renewenvironment{abstract}
{
  \centerline
  {\large \bfseries \scshape Abstract}
  \begin{quote}
}
{
  \end{quote}
}
\newcommand{\Implies}[2]{$\text{\ref{#1}}\implies\text{\ref{#2}}$}
\theoremstyle{definition}
\newtheorem{theo}{Theorem}
\newtheorem{prop}[theo]{Proposition}
\newtheorem{lemma}[theo]{Lemma}
\title{\uppercase{Families of connected self-similar sets generated by Complex Trees}}
\author{
  Bernat Espigule
  \\
  \texttt{bernat@espigule.com}
}
\date{}
\begin{document}

\maketitle

\begin{abstract}
The theory of complex trees is introduced as a new approach to study a broad class of self-similar sets. Systems of equations encoded by complex trees tip-to-tip equivalence relations are used to obtain one-parameter families of connected self-similar sets~$F_\mathcal{A}(z)$. In order to study topological changes of~$F_\mathcal{A}(z)$ in regions~$\mathcal{R}\subseteq\mathbb{C}$ where these families are defined, we introduce a new kind of set~$\mathcal{M}\subseteq\mathcal{R}$ which extends the usual notion of connectivity locus for a parameter space. Moreover we consider another set~$\mathcal{M}_0\subset\mathcal{M}$ related to a special type of connectivity for which we provide a theorem. Among other things, the present theory provides a unified framework to families of self-similar sets traditionally studied as separate with elements~$F_\mathcal{A}(z)$ disconnected for parameters $z\in\mathcal{R}\backslash\mathcal{M}$.
\end{abstract}

\keywords{complex trees \and self-similar sets \and IFSs \and Hausdorff dimension \and OSC \and fractal dendrites \and fractal trees}

\section{Introduction}

In a talk titled "Geometry of Self-similar Sets"~\cite{bandt2014geometry} Christoph Bandt explained how self-similar sets can be treated rigorously, like manifolds in analysis. In particular, he showed that for self-similar sets~$F=f_1(F)\cup f_2(F)$ generated by the pair of mappings
\begin{equation*}
	f_1(z)=c_1z \quad \text{and}\quad f_2(z)=c_2(z-1)+1 \qquad \text{ with }~0<|c_1|,|c_2|<1.
\end{equation*}
one can establish conditions for the existence of an intersection point~$x\in f_1(F)\cap f_2(F)$ accessible from the coding addresses $111\overline{2}$~and~$211\overline{2}$,~i.e. $f_1 f_1 f_1 f_2 f_2\ldots(z)=x=f_2 f_1 f_1 f_2 f_2\ldots(z)$. He showed that for $c_2=1+c_1^2/(c_1+1)$ the equivalence relation~$111\overline{2}\sim211\overline{2}$ is being satisfied. As he noted, parameters $c_1$~and~$c_2$ need to be picked carefully in order to avoid further intersection points~$x'\in f_1(F)\cap f_2(F)$. Motivated by Bandt's observation, we will establish conditions for the existence of such parameters in families of self-similar sets~$F$ generated by $n$~mappings
\begin{equation}\label{ifs}
	f_1(z)=1+c_1z \text{ , }\quad f_2(z)=1+c_2z \text{ ,}\quad\text{\dots , }\quad f_n(z)=1+c_nz \qquad \text{ with }~0<|c_1|,|c_2|,\dots,|c_n|<1.
\end{equation}
The restriction to this kind of iterated function system (IFS) is due to a geometric interpretation that arises naturally. A self-similar set~$F$ generated by this system is completely encoded by the limiting points of what we call complex tree, a special type of fractal tree in the plane. The idea behind the notion of complex tree appeared to the author as a combinatorial generalization of the geometric series~$1+z+z^2+z^3+...$. When~$|z|<1$ the series always converges and their partial sums form a sequence of points that spiral in towards $1/(1-z)$. If instead of multiplying by~$z$ each successive summand, $1, z, z^2,\dots$, we consider all the possible choices of a collection of $n$ complex numbers $\{c_1,c_2,\dots,c_n\}=\mathcal{A}$ 
we will end up with a \textbf{complex tree} $T_\mathcal{A}$. By definition we set the~$n$ complex-valued letters of the alphabet~$\mathcal{A}=\{c_1,c_2,\dots,c_n\}$ to be different to each other and with positive absolute value smaller than 1. Our main tool is the parameterization of~$T_\mathcal{A}$ in terms of complex points~$\phi(w)$ generated by projecting words~$w:=w_1w_2\dots w_m$, with letters $w_1, w_2,\dots, w_m \in \mathcal{A}$, through the~\textbf{geometric~map}~$\phi$ defined as:
\begin{equation}
	\phi(w):=1+w_1+w_1w_2+w_1w_2w_3+\dots+w_1w_2w_3\dots w_m.
\end{equation}
When a word~$w=w_1w_2w_3\dots w_m$ has a finite number of letters, $\phi(w)$ is called a \textbf{node} of the complex tree~$T_\mathcal{A}$. But when a word~$w=w_1 w_2\dots w_k\dots$ has infinitely many letters taken from the~$n$-ary alphabet~$\mathcal{A}=\{c_1,c_2,\dots,c_n\}$ then $\phi(w)$ is called a \textbf{tip point} of the complex tree~$T_\mathcal{A}$ and we express it as an infinite sum
 \begin{equation}
 	\phi(w)=1+w_1+w_1w_2+w_1w_2w_3+\dots+w_1w_2w_3\dots w_k+\dots=\sum_{k=0}^\infty w_{|k}
 \end{equation}
where each summand $w_{|k}=w_1w_2\dots w_k$ is assumed to be the complex multiplication of the individual letters of the word~$w$ pruned up to its $k$th letter~$w_k$. For $k=0$, we have that $w_{|0}=w_0=e_0$ where~$e_0$ is the~\textbf{empty string} with assigned value equal to~1. The node $\phi(e_0)=1$ is the~\textbf{root} of our $n$-ary complex trees where their first $n$~nodes, $\phi(c_1)=1+c_1$, $\phi(c_2)=1+c_2$,~\dots,~$\phi(c_n)=1+c_n$, sprout from. 
\begin{figure}[H]
\begin{overpic}[width=5.7in]{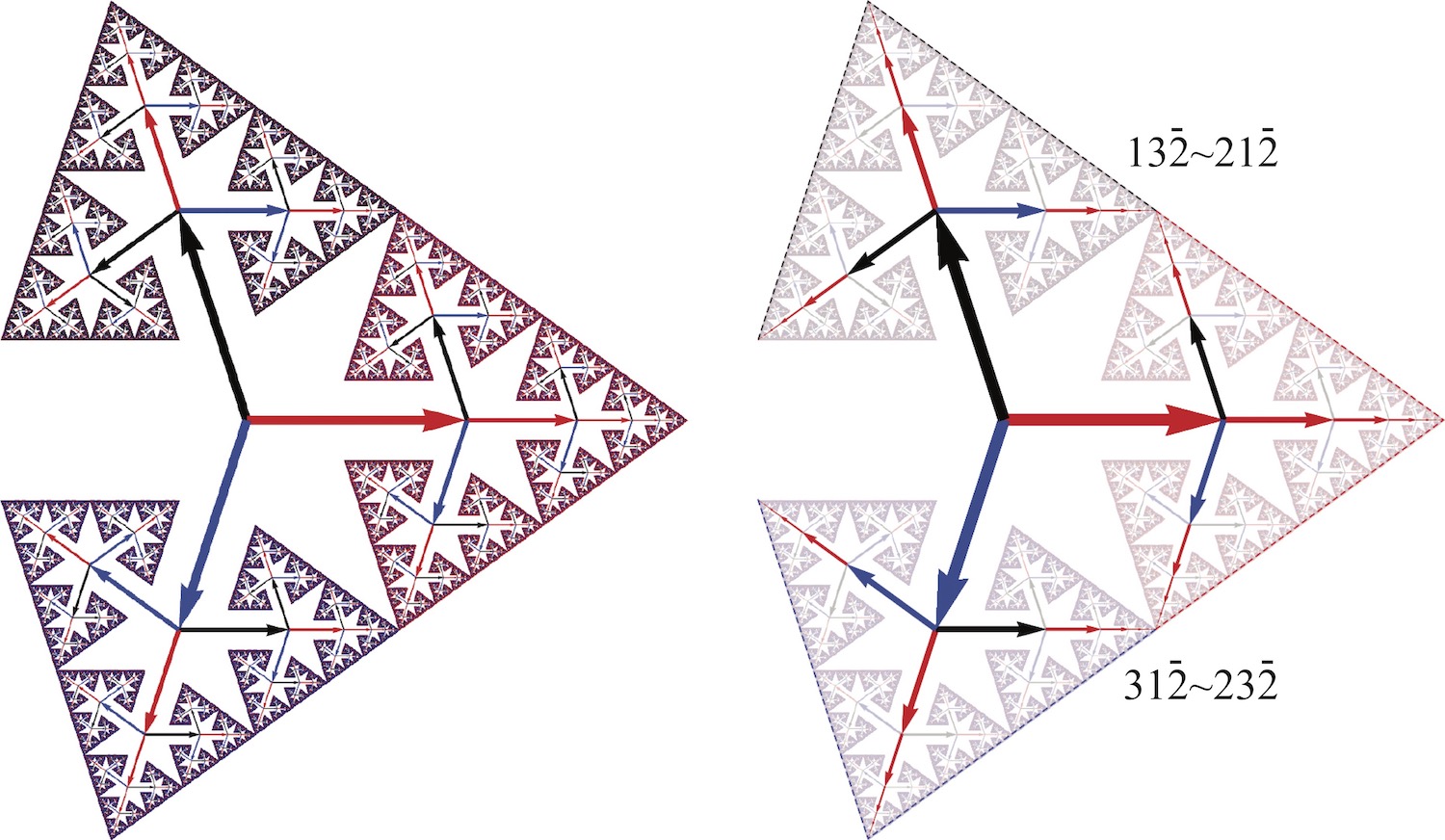}
\put(58,29){$\phi(e_0)=1$}
\put(58.7,44.1){$\phi(c_1)$}
\put(58.7,13.1){$\phi(c_3)$}
\put(84.7,30.2){$\phi(c_2)$}
\end{overpic}
\centering
\caption{Ternary tree $T\{c_1,c_2,c_3\}$ with complex-valued letters $c_1=i^{6/5}/2$, $c_2=1/2$, and $c_3=-i^{4/5}/2$.
}
\label{upternary}
\end{figure}
\noindent By imposing a color code, $\{1\rightarrow c_1, 2\rightarrow c_2,\dots,n\rightarrow c_n\}$, words~$u=u_1u_2\dots u_m$ can be retrieved by reading the color sequence from the root~$\phi(e_0)$ to the desired node~$\phi(u)$. From now on, letters~$w_k\in\{c_1,c_2,\dots,c_n\}$ found in words will be replaced by numeric symbols~$\{1,2,\dots,n\}$ to facilitate reading, for example $c_1c_3c_2c_2=1322$. Infinite sequence of letters $w_1,w_2,\dots,w_k,\dots\in\mathcal{A}$ that are eventually periodic have their associated tip point~$\phi(w)$ reduced to algebraic expressions in terms of $c_1,c_2,\dots,c_n\in\mathcal{A}$. For example, pairs of tip points meeting at the same point with their branch-path highlighted in figure~\ref{upternary} have the following closed forms
\begin{equation*}
\phi(13222\dots)=\phi(13\overline{2})=1+c_1+c_1c_3+c_1c_3c_2+\dots=1+c_1+c_1c_3\sum_{k=0}^\infty c_2^k=1+c_1+\frac{c_1c_3}{1-c_2},
\end{equation*}
\begin{equation*}
\phi(21\overline{2})=1+c_2+\frac{c_2c_1}{1-c_2},\quad \phi(23\overline{2})=1+c_2+\frac{c_2c_3}{1-c_2}, \quad \phi(31\overline{2})=1+c_3+\frac{c_3c_1}{1-c_2}.
\end{equation*}
Let $\mathcal{A}^m:=\{v=v_1v_2\dots v_m : v_1,v_2,\dots, v_m\in\mathcal{A}\}$ denote the set of finite words of length~$m$ over the alphabet~$\mathcal{A}$. Let $\mathcal{A}^*:=\cup_{m\geq 0}\mathcal{A}^m$ be the set of finite words of any length. Let $\mathcal{A}^\infty:=\{w=w_1w_2\dots : w_1,w_2,\ldots\in\mathcal{A}\}$ denote the set of all infinite words. And let $\Omega:=\mathcal{A}^*\cup\mathcal{A}^\infty$ be the set of all finite and infinite words. The geometric map $\phi$ has the following property. Let $v\in \mathcal{A}^m$ and $w\in \Omega$ then:
\begin{equation}\label{vw}
	\begin{aligned}
\phi(vw)&=1+v_1+v_1v_2+\dots+v_1v_2\dots v_m+(v_1v_2\dots v_m)w_1+\dots  \\
	&=\phi(v)+(v_1v_2\dots v_m )w_1+(v_1v_2\dots v_m )w_1w_2+\dots \\
	&=\phi(v)+(v_1v_2\dots v_m)\cdot(w_1+w_1w_2+\dots) \\
	&=\phi(v)+v\cdot(w_1+w_1w_2+\dots) \\
	&=\phi(v)+v\cdot(\phi(w)-1).
	\end{aligned}
\end{equation}
For a given $n$-ary complex tree~$T_\mathcal{A}$ we define its~\textbf{tipset}~$F_\mathcal{A}$ as the set of all tip points~$\phi(w)$ over the alphabet~$\mathcal{A}$, i.e.
\begin{equation}
	F_\mathcal{A}:=\{\phi(w):w\in \mathcal{A}^\infty\}=\bigcup_{w\in \mathcal{A}^\infty}\phi(w).
\end{equation}
From~(\ref{vw}) we have that a~\textbf{piece}~$F_{v\mathcal{A}}$ of level~$m$, defined as the set of all tip points~$\phi(vw)$ with $v$ fixed for a given finite word~$v=v_1v_2\dots v_m\in\mathcal{A}^m$, can be written in terms of the tipset~$F_\mathcal{A}$ as 
\begin{equation}
	F_{v\mathcal{A}}=\bigcup_{w\in \mathcal{A}^\infty}\phi(vw)=\phi(v)+v\cdot(\bigcup_{w\in \mathcal{A}^\infty}\phi(w)-1)=\phi(v)+v\cdot(F_\mathcal{A}-1)=f_v(F_\mathcal{A}),\label{piece}
\end{equation}
where~$f_v$ is a similarity map defined as~$f_v(z):=\phi(v)+v\cdot(z-1)$. Moreover, we have that the tipset~$F_\mathcal{A}$ of an $n$-ary tree admits a decomposition in~$n^m$ pieces $F_{v\mathcal{A}}$ of level~$m$, i.e. 
\begin{equation}
	F_\mathcal{A}=\bigcup_{w\in\mathcal{A}^\infty}\phi(w)=\bigcup_{v\in\mathcal{A}^m} \bigcup_{w\in \mathcal{A}^\infty}\phi(vw)=\bigcup_{v\in\mathcal{A}^m}F_{v\mathcal{A}}.
\end{equation}
In particular, for $m=1$ we have that $\{f_v\}_{v\in\mathcal{A}}=\{f_1(z)=1+c_1z,f_2(z)=1+c_2z,\dots,f_n(z)=1+c_nz\}$ satisfies the~\textbf{self-similarity equation} introduced by Hutchinson~\cite{hutchinson1981fractals},
\begin{equation}
	F_\mathcal{A}=\bigcup_{v\in\mathcal{A}^1}F_{v\mathcal{A}}=F_{1\mathcal{A}}\cup F_{2\mathcal{A}}\cup\dots\cup F_{n\mathcal{A}}=f_1(F_\mathcal{A})\cup f_2(F_\mathcal{A})\cup\dots\cup f_n(F_\mathcal{A}).
\end{equation}
So a tipset~$F_\mathcal{A}$ is the invariant set generated by the IFS introduced in~(\ref{ifs}) which consists of~$n$~mappings~$\{f_1,f_2,\dots,f_n\}$ where the parameters $c_1,c_2,\dots,c_n$ are the complex-valued letters of the alphabet~$\mathcal{A}$. See example below. 
\begin{figure}[H]
\includegraphics[width=5.7in]{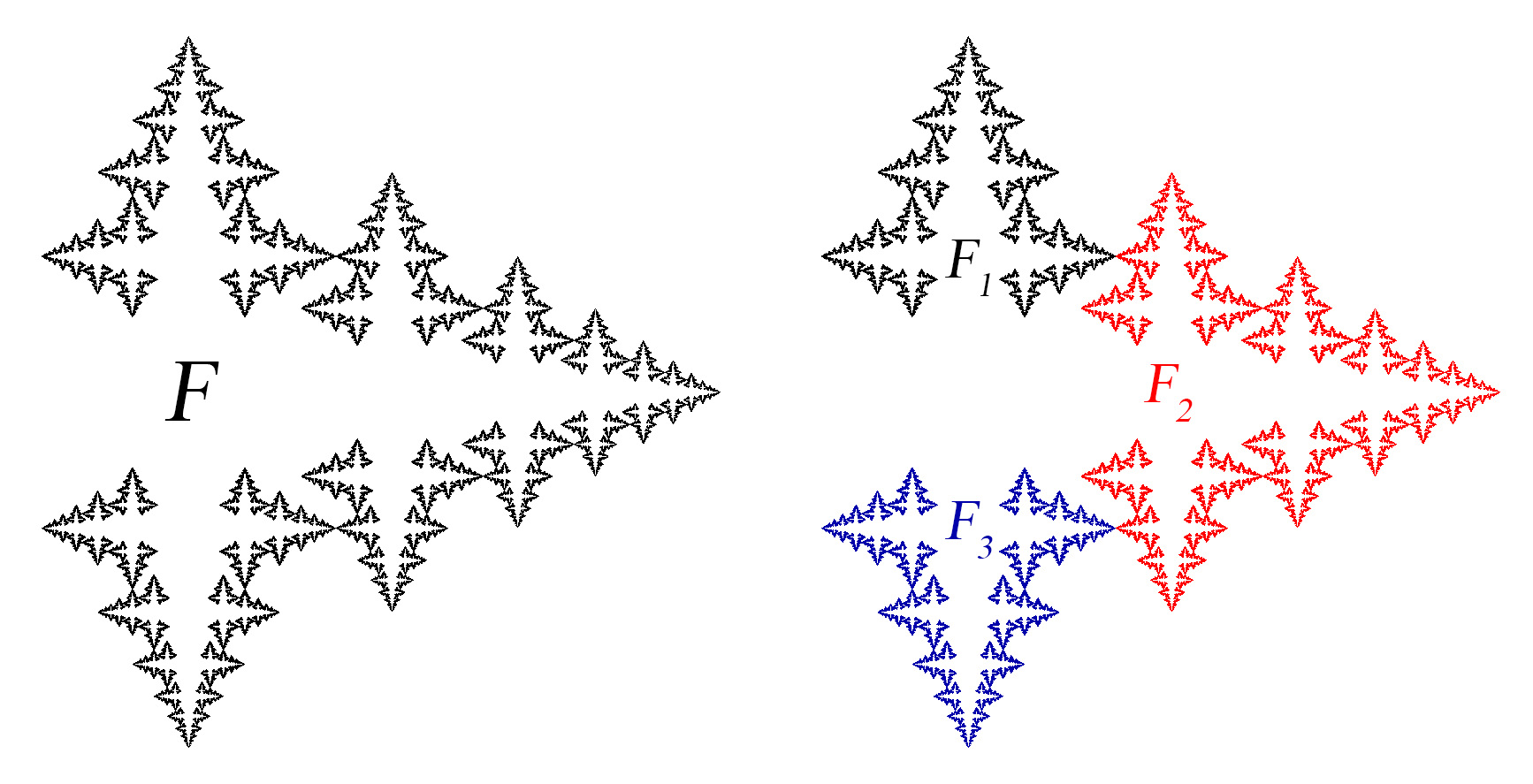}
\centering
\caption{Tipset~$F_\mathcal{A}$ with its first-level pieces~$F_{1\mathcal{A}}$, $F_{2\mathcal{A}}$,~and~$F_{3\mathcal{A}}$ on the right. Its ternary alphabet $\mathcal{A}=\{c_1,c_2,c_3\}$ has complex-valued letters $c_1=i/(\tau\sqrt{\sqrt{5}})$, $c_2=1/\tau$, and $c_3=-i/(\tau\sqrt{\sqrt{5}})$ where~$\tau$ is the golden ratio.
}
\label{sstipset}
\end{figure}
\noindent The self-similar nature of the tipset~$F_\mathcal{A}$ implies that pieces~$F_{v\mathcal{A}}$ of any level~$m$ are exact smaller copies of~$F_\mathcal{A}$. Hence, knowing how the first-level pieces $F_{1\mathcal{A}}, F_{2\mathcal{A}},\dots, F_{n\mathcal{A}}$ intersect each other is all we need in order to capture the topological structure of~$F_\mathcal{A}$. This information is given by what we call the \textbf{topological set}~$Q_\mathcal{A}$ defined as the set of all tip-to-tip equivalence relations $a\sim b$ where $a,b\in\mathcal{A}^\infty$, $a_1\neq b_1$, and~$\phi(a)=\phi(b)$, i.e.
\begin{equation}
	Q_\mathcal{A}:=\{a\sim b :\phi(a)=\phi(b)\text{ for } a,b\in \mathcal{A}^\infty\text{ with }a_1\neq b_1 \}.
\end{equation}
For example, the topological sets~$Q_\mathcal{A}$ for trees depicted in figures~\ref{upternary}~and~\ref{unstable-sierpinski} are $Q_\mathcal{A}=\{13\overline{2}\sim 21\overline{2},31\overline{2}\sim 23\overline{2}\}$ and~$Q_\mathcal{A}=\{11\overline{2}\sim33\overline{2},13\overline{2}\sim 21\overline{2},31\overline{2}\sim 23\overline{2}\}$ respectively. 
Notice that if we have that a tip-to-tip equivalence relation $a\sim b\in Q_\mathcal{A}$ takes place then eq.~\ref{vw} implies that $\phi(ua)=\phi(u)+u\cdot(\phi(a)-1)=\phi(u)+u\cdot(\phi(b)-1)=\phi(ub)$ for all $u\in \mathcal{A}^*$.

\begin{figure}[H]
\includegraphics[width=5.5in]{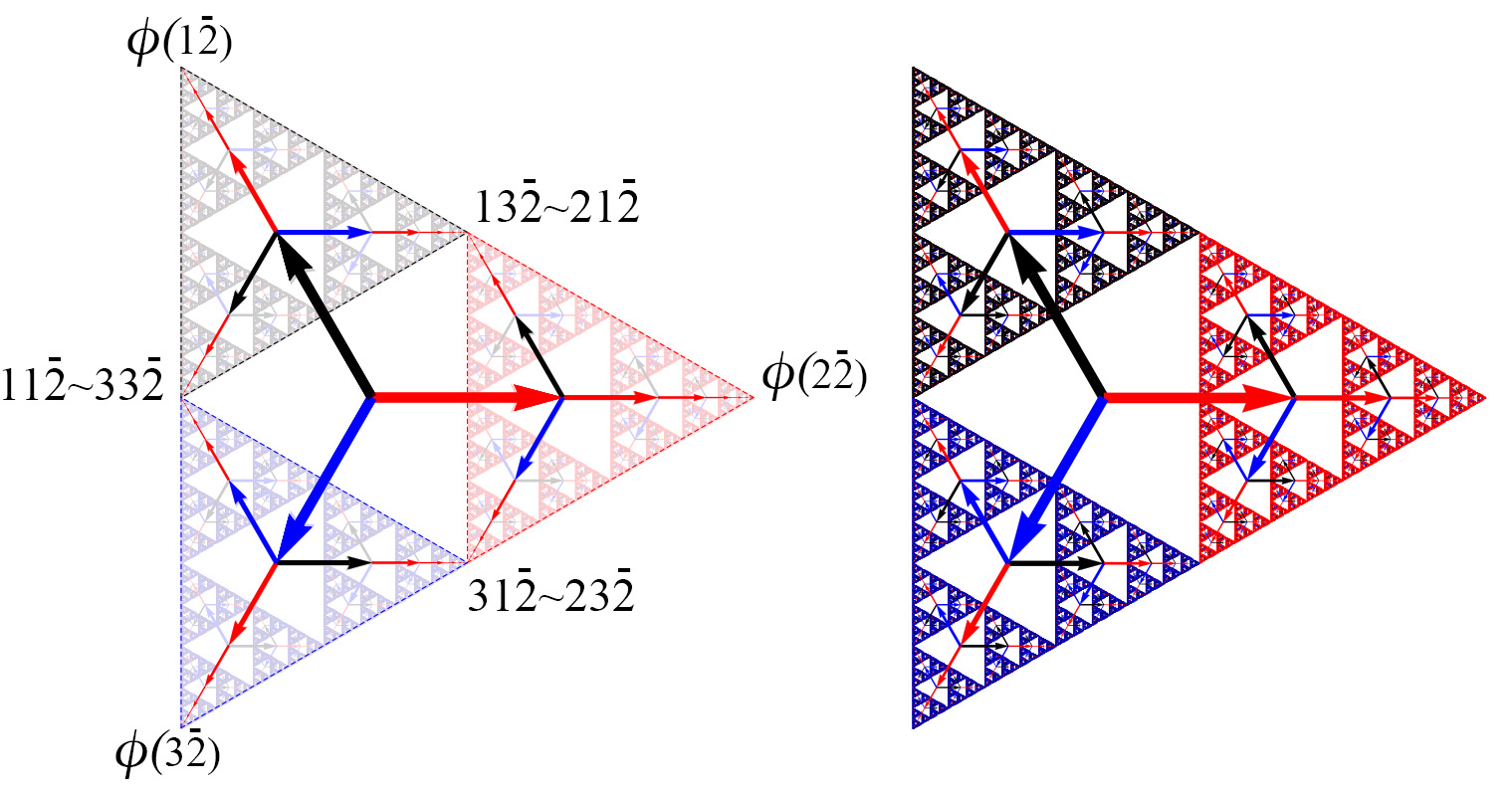}
\centering
\caption{Ternary complex tree $T\{(-1+i \sqrt{3})/4,1/2,(-1-i \sqrt{3})/4\}$ associated to the Sierpinski triangle.}
\label{unstable-sierpinski}
\end{figure}
\begin{figure}[H]
\includegraphics[width=4.4in]{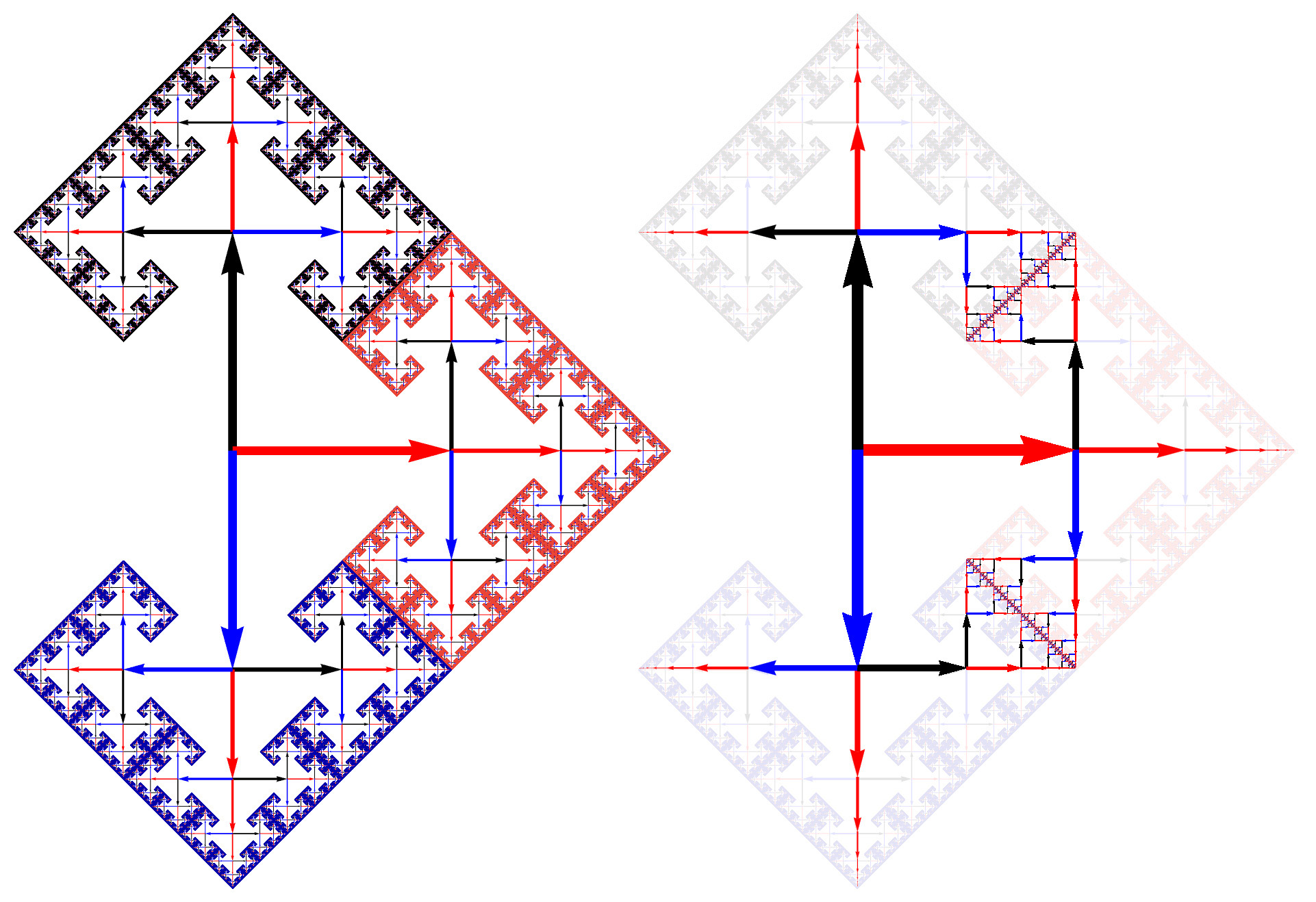}
\centering
\caption{Mirror-symmetric ternary complex tree $T_\mathcal{A}=T\{i/2,1/2,-i/2\}=T\{-i/2,1/2,i/2\}$ with first-level pieces intersecting in a pair of straight segments that go from $\phi(211\overline{2})$ to $\phi(21\overline{2})$ for $F_{1\mathcal{A}}\cap F_{2\mathcal{A}}$, and $\phi(233\overline{2})$ to $\phi(23\overline{2})$ for $F_{2\mathcal{A}}\cap F_{3\mathcal{A}}$. 
}
\label{i2}
\end{figure}

\section{Families of Connected Self-similar Sets}

Given a topological set $Q_\mathcal{A}$ of a tipset-connected $n$-ary complex tree $T_\mathcal{A}$ consider all tip-to-tip equivalence relations $a\sim b\in Q_\mathcal{A}$ as a system of equations $\phi(a)=\phi(b)$ with letters of the alphabet $\mathcal{A}=\{c_1,c_2,\dots,c_n\}$ set as $n$~unknown complex variables. Let's assume $\mathbf{card}(Q_\mathcal{A})=n-1$. Then we have a system of $n-1$~equations with $n$~unknowns that admits a parametric solution in one or several complex variables depending on the tipset-connectivity. For example, the ternary complex tree in figure~\ref{upternary} has a topological set, $Q_\mathcal{A}=\{13\overline{2}\sim 21\overline{2},23\overline{2}\sim 31\overline{2}\}$, that translates to a system of two equations with three unknowns that admits a reduction with~$c_1$ as the only unknown variable: 
\begin{align}\label{z 1/2 1/4z}
	\textbf{sol}(Q_\mathcal{A}):=
	\begin{cases}
	\phi(13\overline{2})=\phi(21\overline{2})\\
	\phi(23\overline{2})=\phi(31\overline{2})
	\end{cases}
	=\begin{cases}
	1+c_1+\frac{c_1c_3}{1-c_2}=1+c_2+\frac{c_2c_1}{1-c_2}\\
	1+c_2+\frac{c_2c_3}{1-c_2}=1+c_3+\frac{c_3c_1}{1-c_2}
	\end{cases}
	\Longrightarrow 	
	\begin{cases}
	c_2=1/2\\
	c_3=1/4c_1
	\end{cases}.
\end{align}
Assigning the variable~$z$ to $c_1$, the parametric family~$T_\mathcal{A}(z)=T\{c_1(z),c_2(z),c_3(z)\}$ obtained from $\textbf{sol}(Q_\mathcal{A})$ is $T_\mathcal{A}(z)=T\{z,1/2,1/4z\}$. The topological set of the tree depicted below is $Q_\mathcal{A}=\{12\overline{31}\sim 22\overline{13},32\overline{13}\sim 22\overline{31}\}$, and $\textbf{sol}(Q_\mathcal{A})$ provides a slightly different family, $T_\mathcal{A}(z)=T\{z,-1/2,1/4z\}$.
\begin{equation}\label{z -1/2 1/4z}
	\textbf{sol}(Q_\mathcal{A}):=
	\begin{cases}
	\phi(12\overline{31})=\phi(22\overline{13})\\
	\phi(32\overline{13})=\phi(22\overline{31})
	\end{cases}
=\begin{cases}
	c_1+c_1c_2(\frac{1+c_3}{1-c_1c_3})=c_2+c_2^2(\frac{1+c_1}{1-c_3c_1})\\
	c_3+c_3c_2(\frac{1+c_1}{1-c_1c_3})=c_2+c_2^2(\frac{1+c_3}{1-c_3c_1})
	\end{cases}
	\Longrightarrow 
	\begin{cases}
	c_2=-1/2\\
	c_3=1/4c_1
	\end{cases}.
\end{equation}
\begin{figure}[H]
\includegraphics[width=6in]{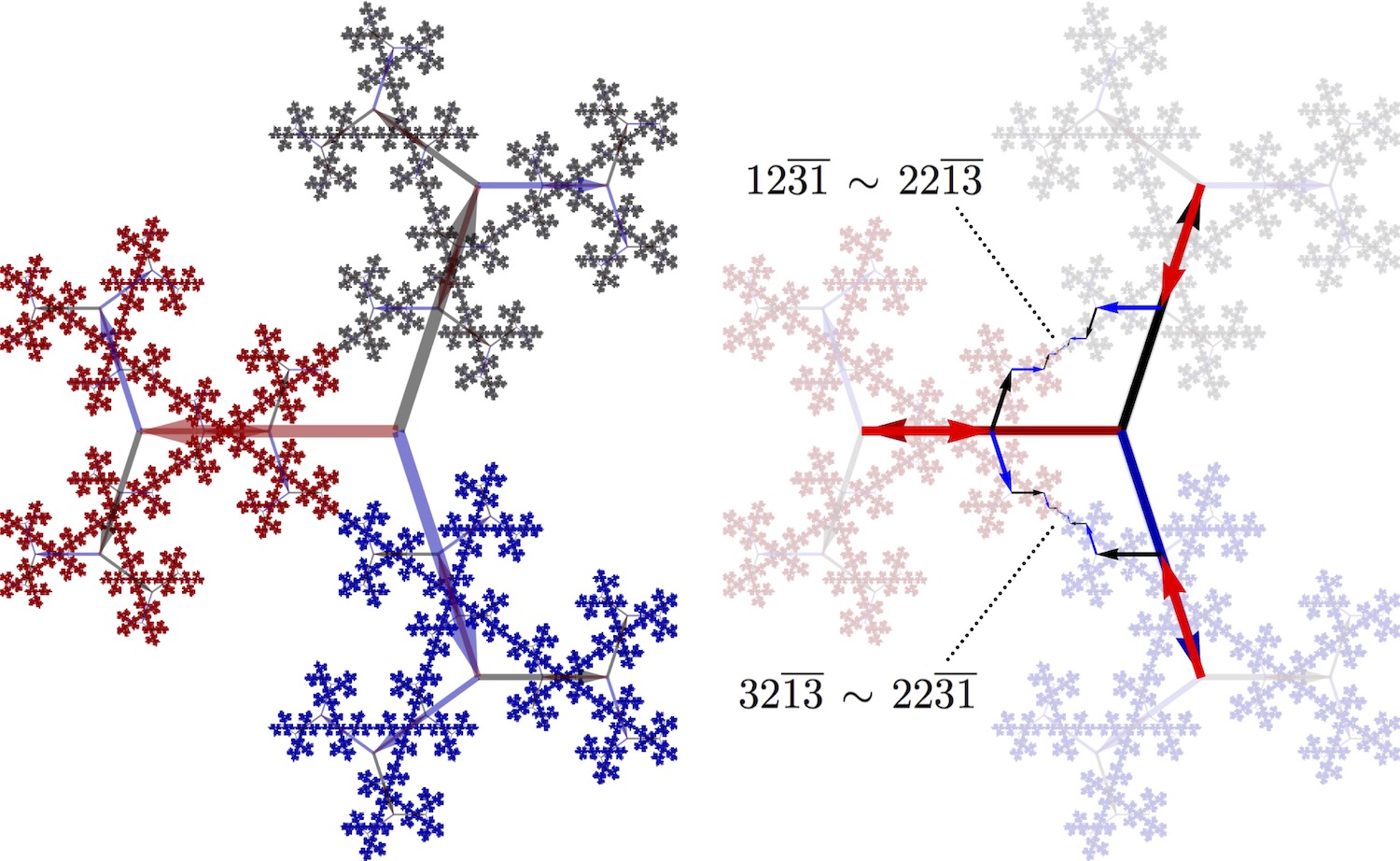}
\centering
\caption{Ternary tree $T\{c_1,c_2,c_3\}$ with complex-valued letters $c_1=i^{4/5}/2$, $c_2=-1/2$, and $c_3=-i^{6/5}/2$.}
\label{downternary}
\end{figure}
\noindent The following families of tipset-connected binary trees $T\{z,1+z^2\}$, $T\{z,z+1/(1+z)\}$, $T\{z,1+z+z^2\}$
 were obtained from topological sets of trees with a single tip-to-tip equivalence relation, see figure~\ref{regionsbinary}. Their $\textbf{sol}(Q_\mathcal{A})$ get reduced to the following algebraic expressions, recall that by definition $0<|c_1|,|c_2|<1$ and $c_1\neq c_2$,
\begin{align}\label{z 1+z^2}
	\textbf{sol}(\{1111\overline{2}\sim 211\overline{2}\}):&=
	\begin{cases}\phi(1111\overline{2})=\phi(211\overline{2})\end{cases}\notag\\
	&=\begin{cases}1+c_1+c_1^2+c_1^3+\frac{c_1^4}{1-c_2}=1+c_2+c_2 c_1+\frac{c_2 c_1^2}{1-c_2}\end{cases}\\
	&=\begin{cases}(c_1+1)(c_1-c_2)(c_1^2-c_2+1)/(c_2-1)=0\end{cases} \notag\\
	&\text{ i.e.}\quad c_1^2-c_2+1=0  \quad\Longrightarrow\quad c_2=1+c_1^2.\notag
 \end{align}
 \begin{align}
	\textbf{sol}(\{111\overline{2}\sim 211\overline{2}\}):&=
	\begin{cases}\phi(111\overline{2})=\phi(211\overline{2})\end{cases}\quad\Longrightarrow\quad c_2=c_1+1/(1+c_1).\label{z z+1/(1+z)}\\
	\textbf{sol}(\{111\overline{2}\sim 21\overline{2}\}):&=
	\begin{cases}\phi(111\overline{2})=\phi(21\overline{2})\end{cases}\quad\Longrightarrow\quad c_2=1+c_1+c_1^2.\label{z 1+z+z^2}
 \end{align}
 \begin{figure}[H]
 \centering
 \begin{overpic}[width=1.01\textwidth,tics=10]{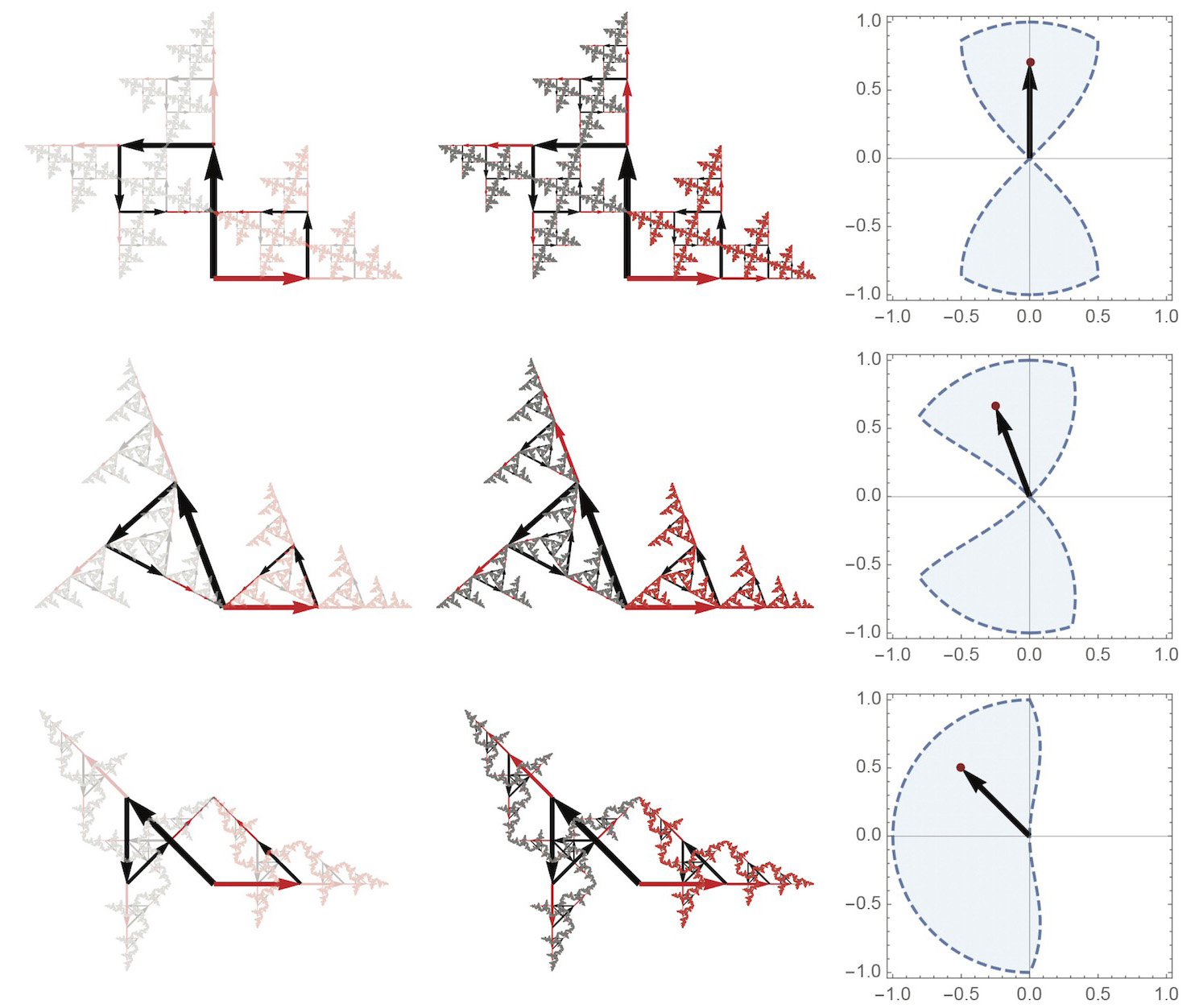}
 \put(19,72) {$1111\overline{2}\sim211\overline{2}$}
 \put(58,77) {$T\{z,1+z^2\}$}
 \put(60,73) {$z=i/\sqrt{2}$}
 \put(16,45) {$111\overline{2}\sim211\overline{2}$}
 \put(50,50) {$T\{z,z+1/(1+z)\}$}
 \put(52,46) {$z=(-1+i \sqrt{7})/4$}
 \put(12,20) {$111\overline{2}\sim21\overline{2}$}
 \put(50,25) {$T\{z,1+z+z^2\}$}
 \put(52,21) {$z=(-1+i)/2$}
\end{overpic}
  \caption{Left column, branch-paths of tip-to-tip equivalence relations $a\sim b$ where $F_{1\mathcal{A}}\cap F_{2\mathcal{A}}=\{\phi(a)=\phi(b)\}$. Central column, binary complex trees $T\{i/\sqrt{2},1/2\}$, $T\{(-1+i \sqrt{7})/4,1/2\}$, and $T\{(-1+i)/2,1/2\}$. Right column, regions $\mathcal{R}:=\{z\in\mathbb{C}: 0<|z|,|c_2(z)|<1\}$ where one-parameter families of complex trees~$T_\mathcal{A}(z)=T\{z,c_2(z)\}$ are defined. From top to bottom the algebraic expression for $c_2(z)$ was obtained in~(\ref{z 1+z^2}), (\ref{z z+1/(1+z)}), and~(\ref{z 1+z+z^2}) respectively. 
    }
  \label{regionsbinary}
\end{figure}
\noindent Self-similar sets associated to these families of binary complex trees $T_\mathcal{A}(z)=T\{c_1(z),c_2(z)\}$ are always connected because $Q_\mathcal{A}(z)\neq\emptyset$ contains at least a tip-to-tip equivalence relation $a\sim b$, i.e.~$\phi(a),\phi(b)\in F_{1\mathcal{A}(z)}\cap F_{2\mathcal{A}(z)}\neq\emptyset$ and by self-similarity the whole tipset $F_\mathcal{A}(z)$ is connected.
\\

It is important to notice that for $\mathbf{card}(Q_\mathcal{A})\geq n$ the solution might still lead to a parametric alphabet in one or several complex variables. For example the ternary complex tree in figure~\ref{stablegasket} has five tip-to-tip equivalence relations, i.e.~$\mathbf{card}(Q_\mathcal{A})=5$, with topological set $Q_\mathcal{A}=\{111\overline{2}\sim331\overline{2}, 131\overline{2}\sim211\overline{2}, 13\overline{2}\sim23\overline{2}, 13\overline{2}\sim33\overline{2}, 33\overline{2}\sim23\overline{2}\}$, but $\mathbf{sol}(Q_\mathcal{A})$ still gives us a parametric family of ternary complex trees~$T_\mathcal{A}(z)$ in one complex variable.
 \begin{figure}[H]
 \centering
 \begin{overpic}[width=5.7in,tics=10]{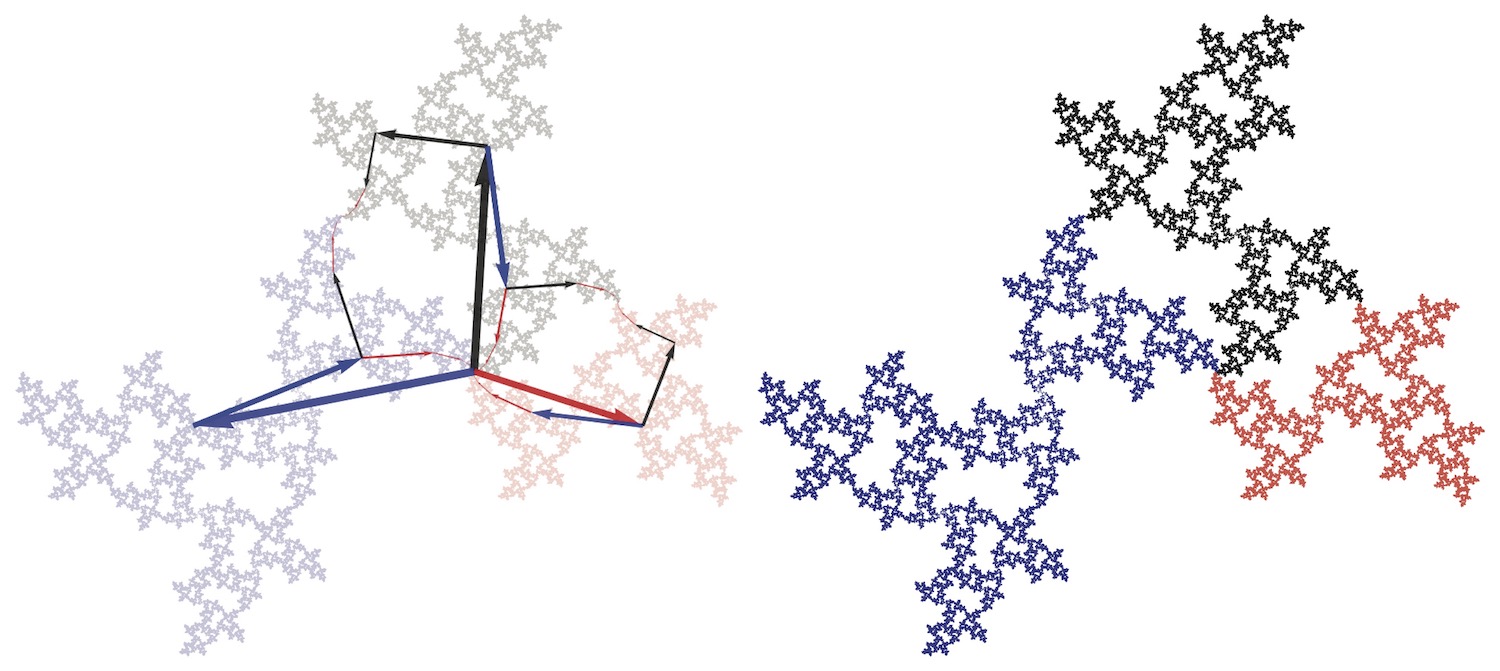}
 \put(8,31) {$111\overline{2}\sim331\overline{2}$}
 \put(42,26) {$131\overline{2}\sim211\overline{2}$}
 \put(23,15) {$13\overline{2}\sim23\overline{2}$}
 \put(23,11) {$13\overline{2}\sim33\overline{2}$}
 \put(23,7) {$33\overline{2}\sim23\overline{2}$}
 \put(77,8){$F_\mathcal{A}$}
\end{overpic}
 \begin{overpic}[width=5.7in,tics=10]{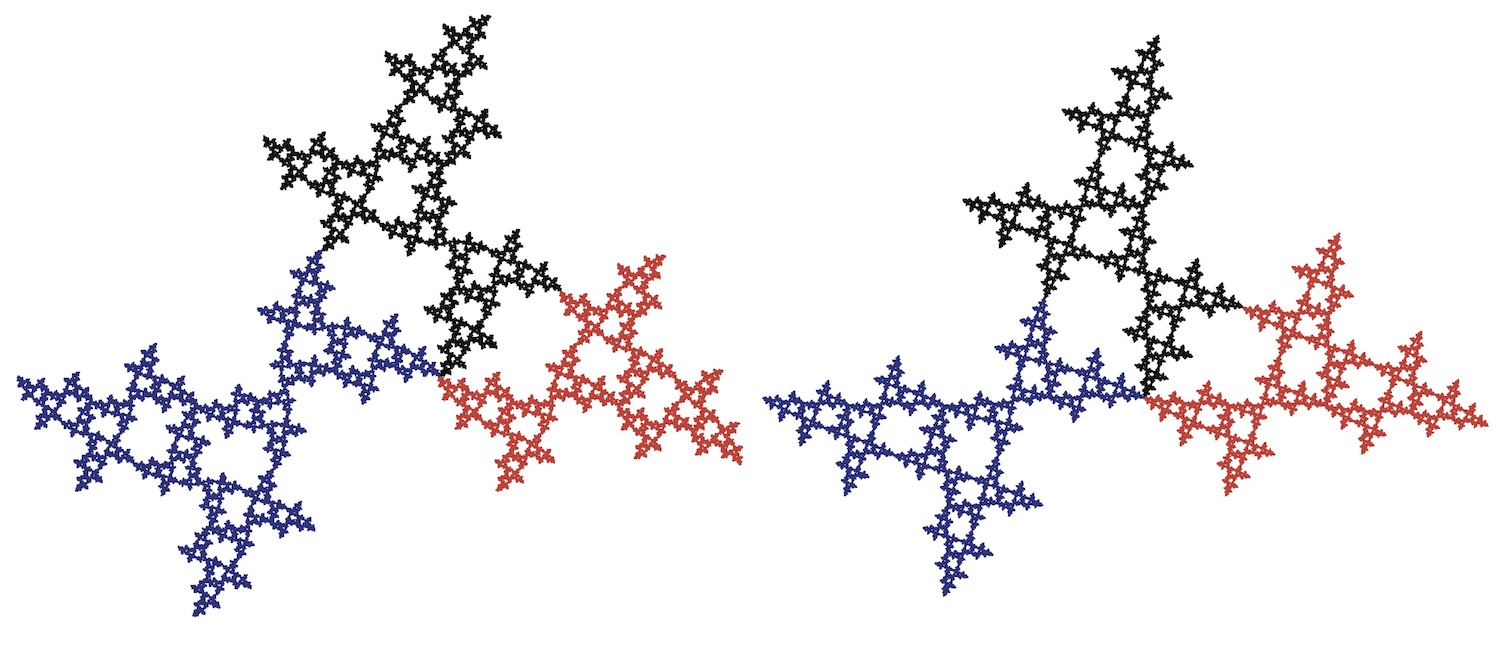}
  \put(25,8){$F_\mathcal{A'}$}
  \put(73,8){$F_\mathcal{A''}$}
  \end{overpic}
  \caption{Branch-paths of tip-to-tip equivalence relations $a\sim b\in Q_\mathcal{A}$ for a ternary complex tree~$T_\mathcal{A}$ with alphabet $\mathcal{A}=\{0.03 +i0.5,0.376 -i0.121,-0.624-i0.121\}$. Below, pair of tipsets~$F_\mathcal{A'}$~and~$F_\mathcal{A''}$ topologically homeomorphic to~$F_\mathcal{A}$, i.e. their share the same topological set $\{111\overline{2}\sim331\overline{2}, 131\overline{2}\sim211\overline{2}, 13\overline{2}\sim23\overline{2}, 13\overline{2}\sim33\overline{2}, 33\overline{2}\sim23\overline{2}\}$.
    }
  \label{stablegasket}
\end{figure}

\subsection{The Unstable Set~$\mathcal{M}$}\label{M}
The tipset-connected tree~$T_\mathcal{A}$ in figure~1 with $\mathbf{sol}(Q_\mathcal{A})$ reduced to a parametric alphabet~$\mathcal{A}(z)=\{z,1/2,1/4z\}$ in one complex variable implies that there exists an epsilon-neighborhood around $z=i^{6/5}/2$ where trees~$T_\mathcal{A}(i^{6/5}/2+\epsilon)$ share the same topological set~$\{13\overline{2}\sim 21\overline{2},23\overline{2}\sim 31\overline{2}\}=Q_\mathcal{A}=Q_\mathcal{A}(i^{6/5}/2+\epsilon)$. We call such a tree \textbf{structurally stable}. On the other hand, if for a given tree~$T_\mathcal{A}$ such an epsilon-neighborhood does not exist we call $T_\mathcal{A}$ \textbf{structurally unstable}. As an example consider the ternary complex tree in figure~\ref{unstable-sierpinski} with topological set $Q_\mathcal{A}=\{11\overline{2}\sim 33\overline{2},13\overline{2}\sim 21\overline{2},23\overline{2}\sim 31\overline{2}\}$. Its tipset is the Sierpinski triangle and $\mathbf{sol}(Q_\mathcal{A})$ gets reduced to a numerical solution, 
$\mathcal{A}=\{(-1+i \sqrt{3})/4,1/2,(-1-i \sqrt{3})/4\}$, so any perturbation of its alphabet $\mathcal{A}$ will destroy the original topological set,~$Q_\mathcal{A}$. 
The dichotomy between structurally stable and structurally unstable complex trees opens the possibility to consider the analytic region~$\mathcal{R}:=\{z:0<|c_1(z)|,|c_2(z)|,\dots,|c_n(z)|<1 \}$ for which a given family $T_\mathcal{A}(z)=T\{c_1(z),c_2(z),\dots,c_n(z)\}$ is defined, as a disjoint union of two different regions, the \textbf{stable set}~$\mathcal{K}$ and the \textbf{unstable set}~$\mathcal{M}$ defined~as  
\begin{align}
  	\mathcal{K}:&=\{z\in\mathcal{R}: Q_\mathcal{A}=Q_\mathcal{A}(z) \},\\
  	\mathcal{M}:&=\{z\in\mathcal{R}: Q_\mathcal{A}\subsetneq Q_\mathcal{A}(z) \}=\mathcal{R}\backslash\mathcal{K}.
\end{align}
These sets are better understood by example. Consider the family $T_\mathcal{A}(z)=T\{z,1/2,1/4z\}$ obtained from the stable ternary tree in figure~\ref{upternary} with topological set $Q_\mathcal{A}=\{13\overline{2}\sim 21\overline{2},31\overline{2}\sim 23\overline{2}\}$. This family is defined for the open annulus $\mathcal{R}=\{z\in\mathbb{C}:1/4<|z|<1\}$ since for $|z|\leq1/4$ we would have that $1\leq1/4|z|=|1/4z|=|c_3(z)|$ which is not allowed by definition. A method to obtain points~$z\in\mathcal{M}$ for a given family~$T_{\mathcal{A}}(z)$ consists in imposing extra tip-to-tip equivalence relations~$a\sim b$ with~$a_1\neq b_1$, i.e. figuring out for which parameters~$z\in\mathcal{R}$ the equality $\phi(a)=\phi(b)$ holds. For example, by imposing $133\overline{2}\sim 211\overline{2}$, the equality $0=\phi(133\overline{2})-\phi(211\overline{2})$ expressed in terms of~$\mathcal{A}(z)=\{c_1\rightarrow z, c_2\rightarrow1/2, c_3\rightarrow1/4z\}$ is reduced to 
\begin{align*}
	0=\phi(133\overline{2})-\phi(211\overline{2})&=c_1(1-c_2+c_3)+(c_1c_3^2-c_2c_1^2)/(1-c_2)-c_2\\
	&=-1/4 + 1/8z + z/2 -z^2\\
	&=-(2 z-1) (1 + 4 z^2)/8z.
\end{align*}
Therefore $z=i/2$~and~$z=-i/2$ are in the unstable set~$\mathcal{M}$ since $133\overline{2}\sim 211\overline{2} \notin\{13\overline{2}\sim 21\overline{2},31\overline{2}\sim 23\overline{2}\}$, see the unstable ternary tree in figure~\ref{i2}. For $0=\phi(11\overline{2})-\phi(33\overline{2})$ we obtain the values $z=(-1\pm i \sqrt{3})/4$ which corresponds to the unstable ternary tree with a Sieprinski triangle tipset, see figure~\ref{unstable-sierpinski}. Unstable trees in figure~\ref{unstable3} where obtained by figuring out for which parameters~$z$ the following tip-to-tip conditions hold:
\begin{align*}
\phi(111\overline{2})&=\phi(21\overline{2}), \quad
1 + z + z^2 + 2 z^3=3/2 + z\quad\Longrightarrow\quad 0=1 - 2z^2 - 4 z^3,\\
 \phi(111\overline{2})&=\phi(211\overline{2}), \quad
 1 + z + z^2 + 2 z^3=3/2 + z/2 + z^2\quad\Longrightarrow\quad 0=1 - z - 4z^3,\\
 \phi(1111\overline{2})&=\phi(211\overline{2}) , \quad
 1 + z + z^2 + z^3 + 2 z^4=3/2 + z/2 + z^2\quad\Longrightarrow\quad 0=1 - z -2z^3 - 4 z^4.
\end{align*}

\begin{figure}[H]
\includegraphics[width=6.5in]{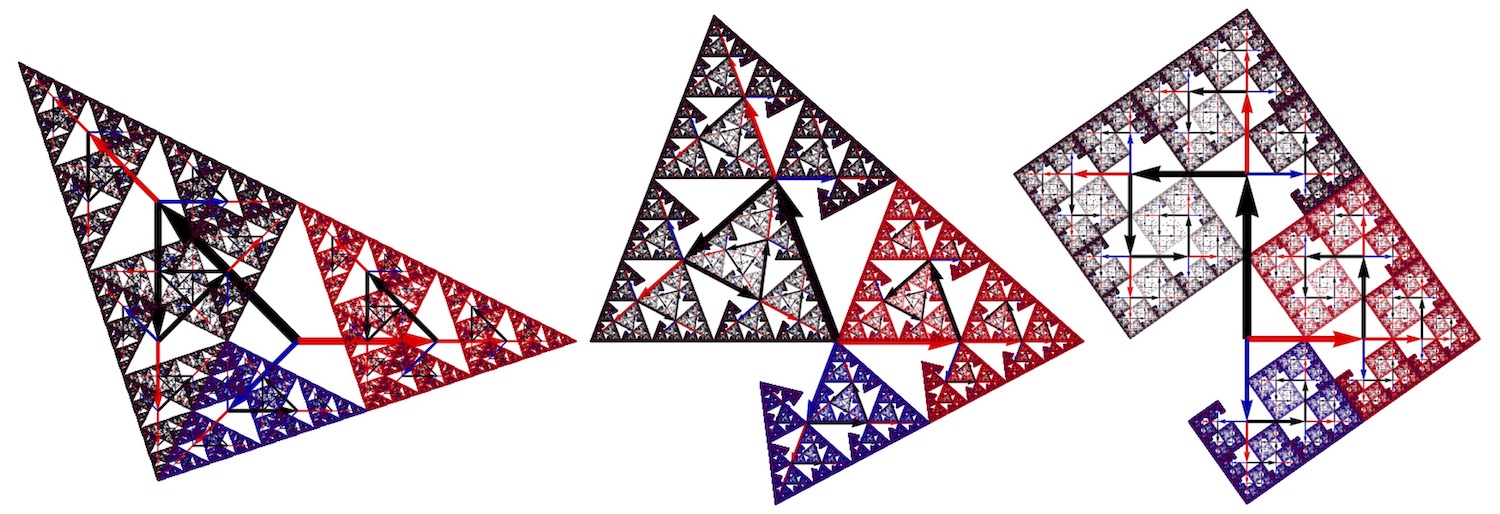}
\centering
\caption{Three examples of unstable ternary trees $T_\mathcal{A}(z)=T\{z,1/2,1/4z\}$, i.e. $Q_\mathcal{A}(z)\neq\{13\overline{2}\sim 21\overline{2},31\overline{2}\sim 23\overline{2}\}$. From right to left their parameters are $z=(-1+i)/2$,  $z=(-1+i\sqrt{7})/4$, and~$z=i/\sqrt{2}$.
}
\label{unstable3}
\end{figure}
\noindent The approximate picture of the unstable set~$\mathcal{M}$ in figure~\ref{region1} was generated by applying this method over all possible equivalence relations $u\overline{2}\sim v\overline{2}$ with $u_1\neq v_1$ and $u,v\in \mathcal{A}(z)^m$ for finite level~$m=10$. The family deduced in~(\ref{z -1/2 1/4z}) from the topological set $Q_\mathcal{A}=\{12\overline{31}\sim22\overline{13},32\overline{13}\sim22\overline{31}\}$ provides another family with a slightly different alphabet $\mathcal{A}(z)=\{z,-1/2,1/4z\}$. The main difference, when compared to $\mathcal{A}(z)=\{z,1/2,1/4z\}$, is that the boundary of its unstable set~$\mathcal{M}$ is predominantly rough, see figure~\ref{region2}. As we prove in~\cite{espigule2019ternary}, for the family $T\{z,1/2,1/4z\}$ this roughness is completely absent and the boundary between the unstable set~$\mathcal{M}$ and the stable set~$\mathcal{K}$ is piece-wise smooth.

\begin{figure}[H]
\begin{overpic}[width=6.4in]{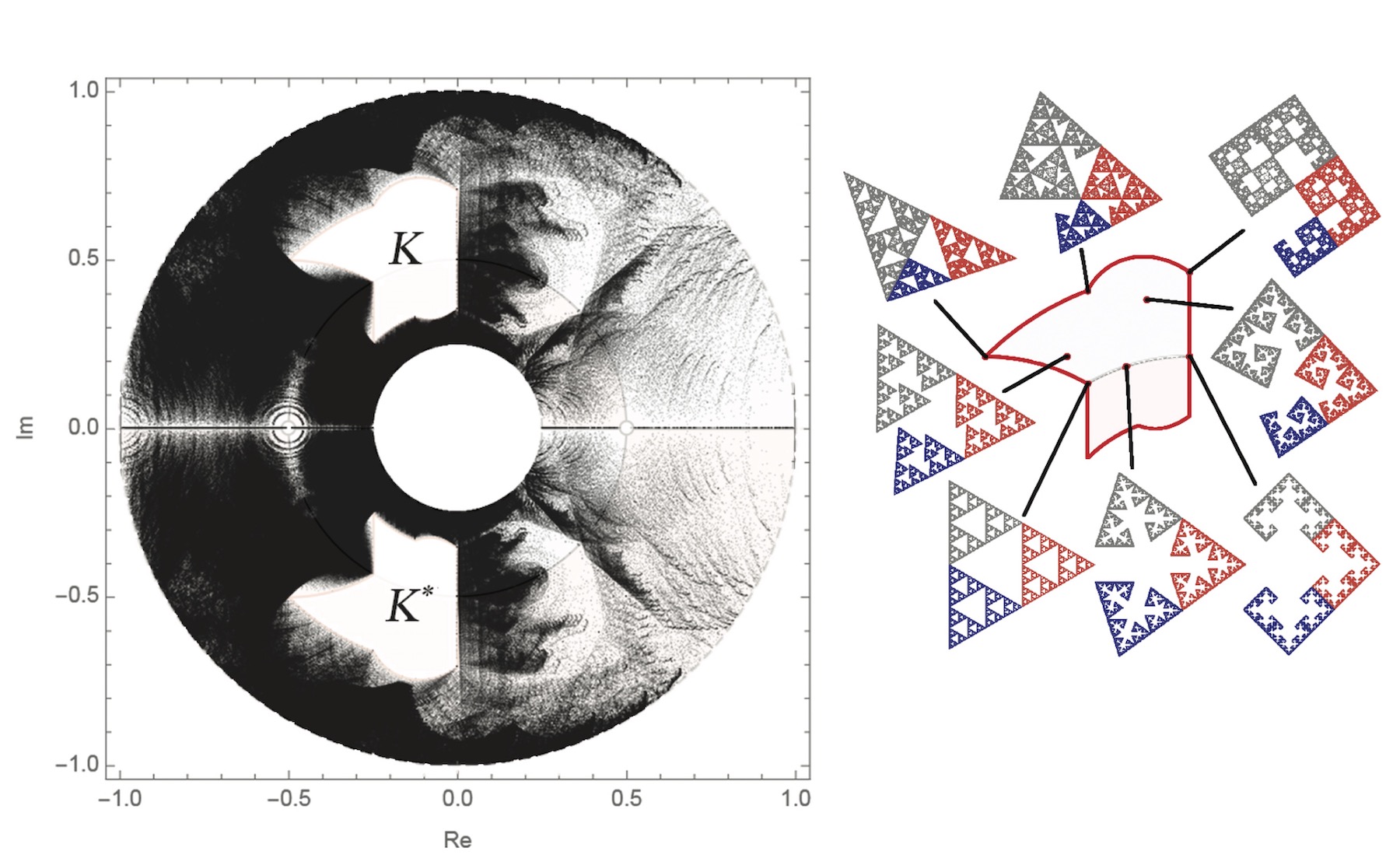}
\put(90,13){$z=i/2$}
\put(76,11){$z=i^{6/5}/2$}
\put(62,12){$z=\frac{-1+i \sqrt{3}}{4}$}
\put(90,56){$z=i/\sqrt{2}$}
\put(74,56){$z=\frac{-1+i\sqrt{7}}{4}$}
\put(61,50){$z=\frac{-1+i}{2}$}
\end{overpic}
\centering
\caption{The unstable set $\mathcal{M}$ for~$T\{z,1/2,1/4z\}$ with a pair of open regions~$K$~and~$K^*$ contained in the stable set~$\mathcal{K}$. On the right, examples of tipsets $F_\mathcal{A}(z)$, five of which have their parameters~$z$ taken from the boundary of~$\mathcal{M}$.
}
\label{region1}
\end{figure}
\begin{figure}[H]
\begin{overpic}[width=6.4in]{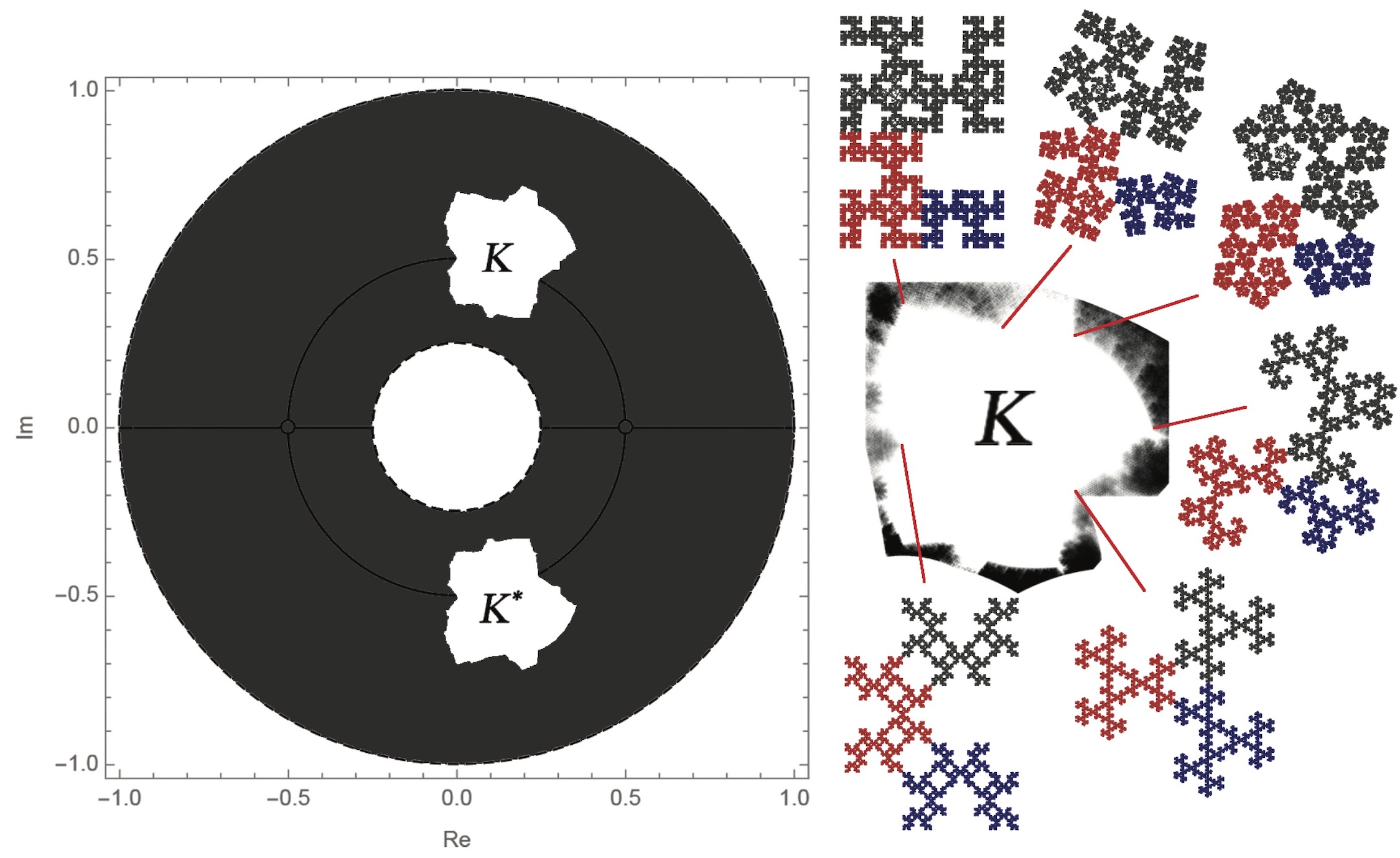}
\put(88,4){$z=\frac{1+i \sqrt{3}}{4}$}
\put(73,4){$z=i/2$}
\put(89,20){$z\approx0.28+0.45i$}
\put(91,57){$z=\frac{1+i\sqrt{7}}{4}$}
\put(74,61){$z\approx0.248+0.660i$}
\put(50,58){$z=i/\sqrt{2}$}
\end{overpic}
\centering
\caption{The unstable set $\mathcal{M}$ for~$T\{z,-1/2,1/4z\}$ with six examples of tipsets $F_\mathcal{A}(z)$ found in~$\partial\mathcal{M}$.
}
\label{region2}
\end{figure}

\subsection{The Root Connectivity Set~$\mathcal{M}_0\subseteq\mathcal{M}$}\label{M0}
In this section we will introduce the root connectivity set~$\mathcal{M}_0$ which is a subset of the unstable set~$\mathcal{M}$ that provides another piece of information about the parameter space of our families of study. Given a tipset~$F_\mathcal{A}$, let $O_{F_\mathcal{A}}$ be its \textbf{overlap set} defined as
\begin{equation}\label{O}
	O_{F_\mathcal{A}}:=\bigcup_{j,k\in \mathcal{A}, j\neq k}F_{j\mathcal{A}}\cap F_{k\mathcal{A}},
\end{equation}
and let $I_{F_\mathcal{A}}$ be its \textbf{intersection set} defined as 
\begin{equation}\label{I}
	I_{F_\mathcal{A}}:=F_{1\mathcal{A}}\cap F_{2\mathcal{A}}\cap\dots\cap F_{n\mathcal{A}}.
\end{equation}
If the overlap set is empty, $O_{F_\mathcal{A}}=\emptyset$, we call the tipset $F_\mathcal{A}$~and its tree~$T_\mathcal{A}$ \textbf{totally disconnected}. On the other hand, if the intersection set is not empty,~$I_{F_\mathcal{A}}\neq\emptyset$, we call the tipset $F_\mathcal{A}$~and its tree~$T_\mathcal{A}$ \textbf{totally connected}. Notice that for binary trees we always have that $O_{F_\mathcal{A}}=I_{F_\mathcal{A}}$, hence they are either totally disconnected or totally connected. For~$n\geq3$ this is not necessarily true since it might happen that $I_{F_\mathcal{A}}=\emptyset\neq O_{F_\mathcal{A}}$, see examples in figures~\ref{upternary}-\ref{downternary}. Totally connected trees with their root~$\phi(e_0)=1$ belonging to the intersection set, $\phi(e_0)\in I_{F_\mathcal{A}}$ are special. To distinguish them from other types of totally connected trees, we call such trees~$T_\mathcal{A}$ and their tipsets~$F_\mathcal{A}$ \textbf{root-connected}.

\begin{figure}[H]
\begin{overpic}[width=6in]{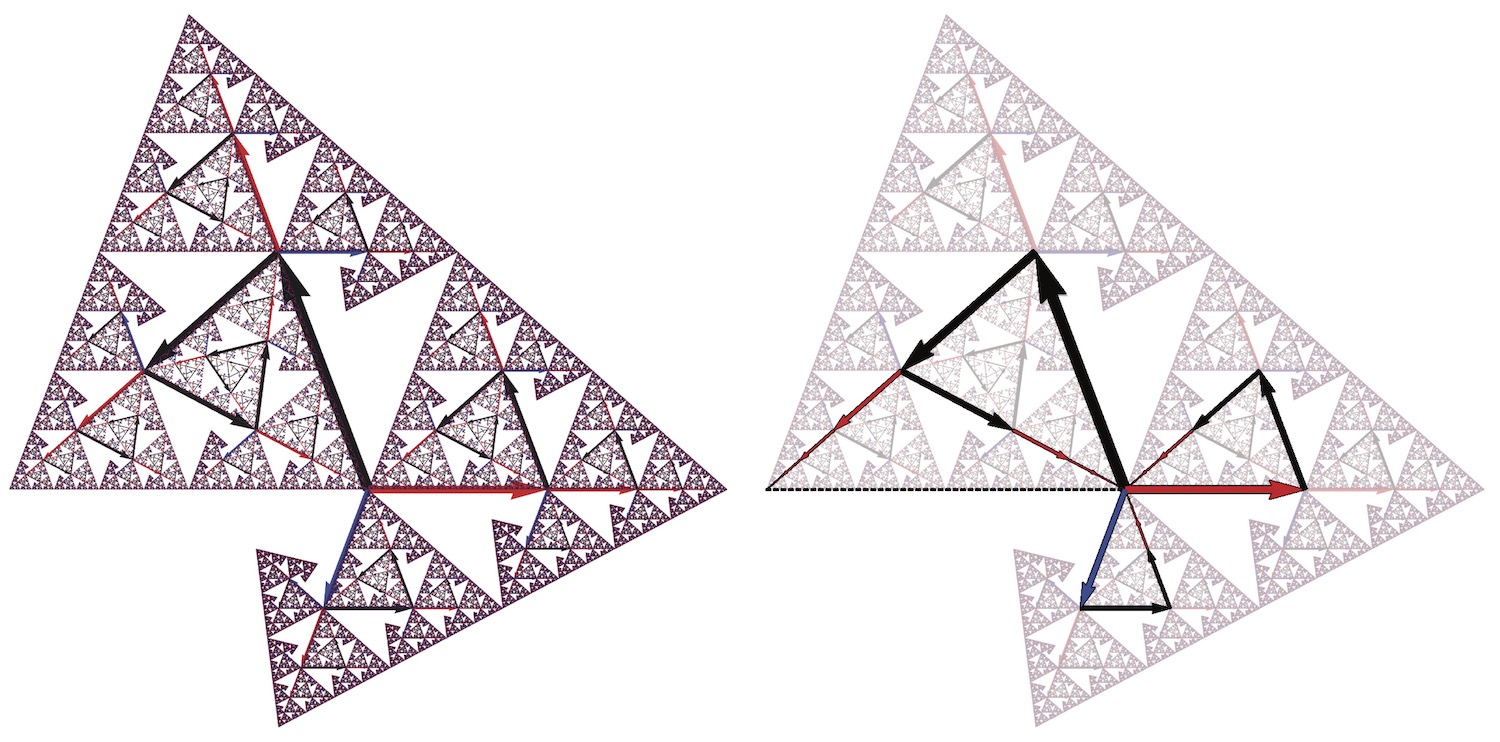}
\put(50,15){$0$}	
\put(64,14.5){$\phi(e_0)=1$}	
\end{overpic}

\centering
\caption{Root-connected ternary tree  $T_\mathcal{A}(z_0)=T_\mathcal{A}((-1+ i\sqrt{7})/4)=T\{(-1+ i\sqrt{7})/4,1/2,(-1- i\sqrt{7})/8\}$. Its~intersection set is $I_{F_\mathcal{A}}=F_{1\mathcal{A}}\cap F_{2\mathcal{A}}\cap F_{2\mathcal{A}}=\{1=\phi(e_0)=\phi(111\overline{2})=\phi(211\overline{2})=\phi(311\overline{2})\}$. The intersection set for this tree is a singleton but the overlap set is not, $I_{F_\mathcal{A}}\neq O_{F_\mathcal{A}}$.
}
\label{rootexample}
\end{figure}

\begin{theo}\label{tiproot connectivity}
\textbf{(Tip-to-root connectivity).} \textit{Let $j\in \mathcal{A}$, $w\in \mathcal{A}^\infty$, and $F_\mathcal{A}$ be a tipset of an $n$-ary complex tree $T_\mathcal{A}$, then the following are equivalent:}
  \begin{enumerate}[label=(\arabic*),ref=(\arabic*)]
   \item $\phi(jw)=\phi(e_0)=1$ \label{tip}
   \item $\phi(w)=0$ \label{zero}
   \item $\phi(jw)=\phi(kw)$ for any  $k\in \mathcal{A}$ \label{tips}
   \item $F_\mathcal{A}$ is root-connected \label{root}
   \item $\phi(v)\in F_\mathcal{A}$ for all $v\in \mathcal{A}^*$ \label{nodes}
  \end{enumerate}
\end{theo}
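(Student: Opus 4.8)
\emph{Proof proposal.} The whole statement runs on the single‑letter case of the composition identity~(\ref{vw}): for $j\in\mathcal{A}$ and $w\in\mathcal{A}^\infty$ one has $\phi(jw)=\phi(j)+j\cdot(\phi(w)-1)=1+j\,\phi(w)$. Since every letter satisfies $j\neq 0$, the equality $\phi(jw)=1$ holds iff $j\,\phi(w)=0$ iff $\phi(w)=0$, which is \Implies{tip}{zero} in both directions. For the link with \ref{tips}, compare $\phi(jw)=1+j\,\phi(w)$ with $\phi(kw)=1+k\,\phi(w)$: if $\phi(w)=0$ then all of these equal $1$, and conversely, choosing $k\in\mathcal{A}$ with $k\neq j$ (possible as $\mathcal{A}$ has at least two letters), $\phi(jw)=\phi(kw)$ forces $(j-k)\,\phi(w)=0$, hence $\phi(w)=0$. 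So the implications among \ref{tip}, \ref{zero}, \ref{tips} are all one‑line computations; I would dispatch those first.

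For the remaining equivalences I would close the cycle \Implies{zero}{root}, \Implies{root}{nodes}, \Implies{nodes}{zero}. \emph{\Implies{zero}{root}:} if $\phi(w)=0$ then $\phi(kw)=1$ for \emph{every} $k\in\mathcal{A}$ by the identity above, and $\phi(kw)\in F_{k\mathcal{A}}$ because the word $kw$ begins with the letter $k$; hence $1=\phi(e_0)\in\bigcap_{k\in\mathcal{A}}F_{k\mathcal{A}}=I_{F_\mathcal{A}}$, which is exactly root‑connectedness (note $I_{F_\mathcal{A}}\neq\emptyset$ is then automatic, so total connectedness comes for free). \emph{\Implies{root}{nodes}:} root‑connectedness gives $1\in I_{F_\mathcal{A}}\subseteq F_\mathcal{A}$; in fact \ref{nodes} is equivalent to $1\in F_\mathcal{A}$, since for any $v\in\mathcal{A}^*$ the piece formula~(\ref{piece}) gives $\phi(v)=\phi(v)+v\cdot(1-1)=f_v(1)\in f_v(F_\mathcal{A})=F_{v\mathcal{A}}\subseteq F_\mathcal{A}$ once $1\in F_\mathcal{A}$, while conversely $v=e_0$ puts $1=\phi(e_0)$ among the nodes. \emph{\Implies{nodes}{zero}:} by that observation $1\in F_\mathcal{A}$, so $\phi(u)=1$ for some $u\in\mathcal{A}^\infty$, and $\phi(u)=1+u_1\,\phi(u_2u_3\dots)$ with $u_1\neq 0$ forces $\phi(u_2u_3\dots)=0$.

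The one point I would state honestly rather than sweep under the rug is the quantifier on $w$: \ref{root} and \ref{nodes} are properties of the tipset alone, whereas \ref{tip}, \ref{zero}, \ref{tips} name a specific pair $(j,w)$, and \Implies{nodes}{zero} only returns \emph{some} infinite word with vanishing $\phi$‑image, not the word originally fixed. So the clean reading of the theorem is: \ref{root} $\Leftrightarrow$ \ref{nodes} $\Leftrightarrow$ ``there exist $j\in\mathcal{A}$, $w\in\mathcal{A}^\infty$ satisfying \ref{tip} (equivalently \ref{zero}, \ref{tips})'', and as soon as one such $w$ exists it works for \emph{every} $j$ simultaneously, because $\phi(w)=0$ does not mention $j$. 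I would insert this as a one‑sentence remark before the cycle. There is no genuine obstacle here — everything reduces to the two structural identities $\phi(jw)=1+j\,\phi(w)$ and $F_{v\mathcal{A}}=f_v(F_\mathcal{A})$ — so the only part needing care is the bookkeeping of these quantifiers.
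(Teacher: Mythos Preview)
Your proof is correct and follows essentially the same route as the paper's: both arguments run entirely on the identity $\phi(jw)=1+j\,\phi(w)$ for the equivalences among \ref{tip}, \ref{zero}, \ref{tips}, and on the piece formula $F_{v\mathcal{A}}=f_v(F_\mathcal{A})$ to pass through \ref{root} and \ref{nodes}, arranged as a cycle. Your explicit remark on the existential quantifier for $w$ when returning from \ref{nodes} to \ref{zero} is a point the paper's proof glosses over (its \Implies{nodes}{tip} step likewise only produces \emph{some} word, not the one originally fixed), so your bookkeeping is in fact cleaner.
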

\begin{proof}
\Implies{tip}{zero}: From \ref{tip} we have that $0=\phi(jw)-\phi(e_0)=1+c_j\phi(w)-1=c_j\phi(w)$ which implies $\phi(w)=0$ since $c_j\neq0$ by definition.
\Implies{zero}{tips}: Independently of the first letter $k\in\mathcal{A}$ we have $\phi(kw)=1+c_k\phi(w)=1=\phi(e_0)=\phi(jw)$.
\Implies{tips}{root}: For each first-level piece there is at least one tip point that meets the root $\phi(1w)=\phi(2w)=\dots=\phi(nw)=1=\phi(e_0)$, hence $I_{F_\mathcal{A}}\neq\emptyset$ and $\phi(e_0)\in I_{F_\mathcal{A}}$ so $F_\mathcal{A}$ is root-connected.
\Implies{root}{nodes}: If $F_\mathcal{A}$ is root-connected we have that $\phi(e_0)\in F_\mathcal{A}$ hence by self-similarity $f_v(\phi(e_0))=\phi(v)\in F_{v\mathcal{A}}\subseteq F_\mathcal{A}$, i.e. all the nodes of the tree $T_\mathcal{A}$ are contained in the tipset $F_\mathcal{A}$. 
\Implies{nodes}{tip}: We have that $\phi(e_0)\in F_\mathcal{A}$ hence $F_\mathcal{A}$ is root-connected and $\phi(e_0)\in F_j$ so there is at least a tip point with word $jw\in \mathcal{A}^\infty$ such that $\phi(jw)=\phi(e_0)=1$.
\end{proof}

\noindent\textit{Remark}. The geometrical explanation behind condition (2) $\phi(w)=0$ in theorem~\ref{tiproot connectivity} comes from the fact that $0$ is actually the base of the \textbf{trunk} of our trees that goes from $0$ to the tree's root $\phi(e_0)=1$. We omit the trunk in our figures but it is always there as a pre-image of first-level branches, i.e. $f_j(\lbrack0,\phi(e_0)\rbrack)=\lbrack\phi(e_0),\phi(j)\rbrack$ for all $j\in \mathcal{A}$. From the structural point of view, $0$ acts as another node and since we have that for a root-connected tree all nodes are contained in the tipset, condition~(5), it shouldn't be surprising that $0$ is also contained in $F_\mathcal{A}$.
\\

\noindent Given an analytic family of trees in one complex variable $T_\mathcal{A}(z)$, we define the \textbf{root connectivity set} $\mathcal{M}_0$ as
\begin{equation}\label{rootmapeq}
	\mathcal{M}_0:=\{z\in\mathcal{M}: \exists w\in \mathcal{A}(z)^\infty \text{ s.t. } \phi(w)=0\}. 
\end{equation}
A direct method to obtain points $z\in\mathcal{M}_0$ consists in finding the zeros of tip points~$\phi(w)$ with letters expressed in terms of the parametric alphabet~$\mathcal{A}(z)$. For example, for $T_\mathcal{A}(z)=T\{z,1/2,1/4z\}$ we have that the tip point $\phi(11\overline{2})=1 + z +  2 z^2$ lies at $0$ when $z_0=(-1+ i\sqrt{7})/4$ and~$z_0=(-1- i\sqrt{7})/4$, see $T_\mathcal{A}((-1+ i\sqrt{7})/4)$ in figures~\ref{unstable3},~\ref{rootexample},~and~\ref{rootmap}. The picture of the unstable set~$\mathcal{M}_0$ shown below was generated using a fast-algorithm covered in~\cite{espigule2019ternary} that checks for a given parameter~$z\in\mathcal{R}$ whether~$z\notin\mathcal{M}_0$.
\begin{figure}[H]
\centering
  \begin{overpic}[width=1\textwidth,tics=10]{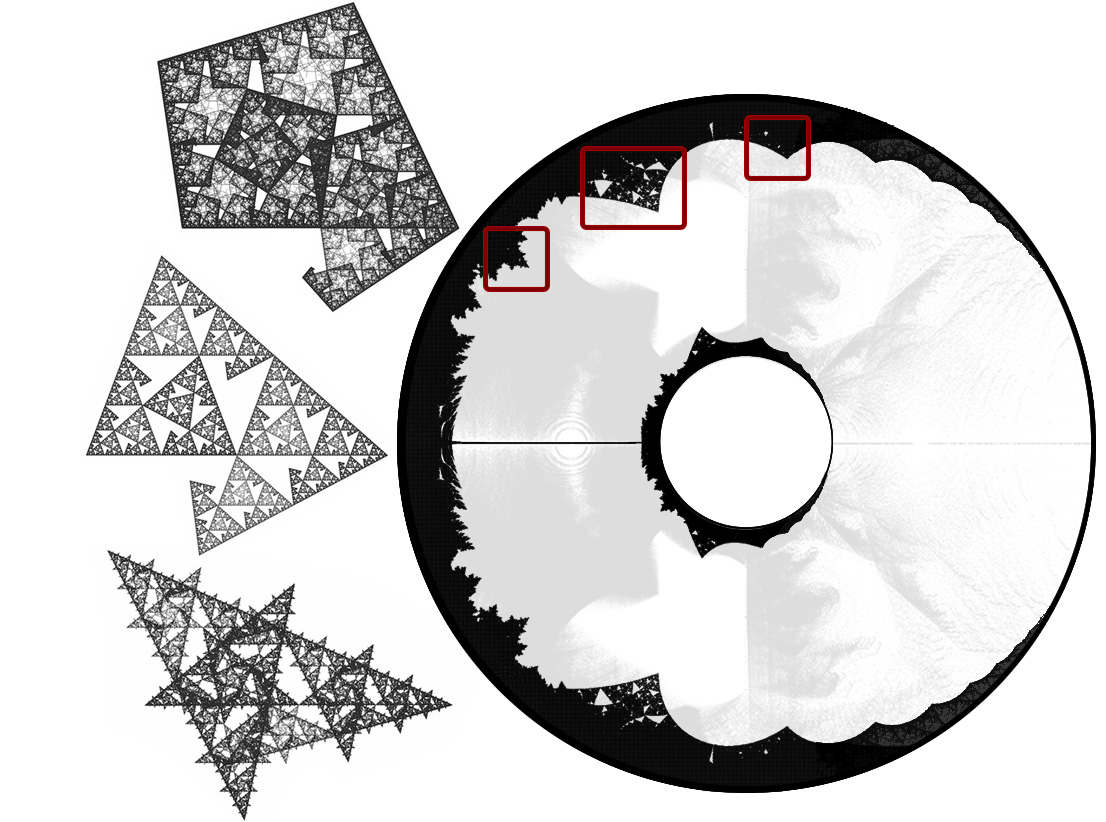}
  \put(4,54){$\phi(111\overline{2})=0$}
  \put(3,30){$\phi(11\overline{2})=0$}
  \put(3,8){$\phi(1111\overline{2})=0$}
  \end{overpic}
  \caption{The root connectivity set $\mathcal{M}_0$ for the family of ternary trees $T\{z,1/2,1/4z\}$ overlaid with the unstable set $\mathcal{M}$ in light gray, see figure~\ref{region1}. On the left, three tipsets $F_\mathcal{A}(z_0)$ with parameters~$z_0$ found in prominent spikes highlighted in $\mathcal{M}_0$. Their tip points meeting at zero are $\phi(111\overline{2})=0$, $\phi(11\overline{2})=0$, and $\phi(1111\overline{2})=0$ with parameter values $z_0\approx0.119492 +i0.813835$, $z_0=(-1+i\sqrt{7})/4$, and $z_0\approx-0.621035+i0.502297$ respectively.
   }
  \label{rootmap}
\end{figure}

\noindent The asymptotic similarity between root-connected tipsets $F_\mathcal{A}(z_0)$ and the boundary regions of~$\mathcal{M}_0$ highlighted in~figure~\ref{rootmap} is very apparent and it is similar in nature to the similarity found between Julia sets of Misiurewicz points and the boundary of the classical Mandelbrot set.
The following theorem tells us that nodes~$\phi(v)$ with associated finite word $v\in \mathcal{A}^*\backslash\{e_0\}$ can be also used to obtain points~$z\in\mathcal{M}_0$.

\begin{theo}\label{noderoot connectivity}
\textbf{(Node-to-root connectivity).} \textit{Let $v\in \mathcal{A}^*\backslash\{e_0\}$, if $\phi(v)=\phi(e_0)$ then $F_\mathcal{A}$ is root-connected.}
\end{theo}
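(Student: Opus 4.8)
The plan is to deduce this from Theorem~\ref{tiproot connectivity} by manufacturing, out of the finite word $v$, an infinite word that $\phi$ sends to the root. The obvious candidate is the purely periodic word $\overline{v}=vvv\cdots\in\mathcal{A}^\infty$, which is well defined precisely because $v\neq e_0$. First I would set $p:=v_1v_2\cdots v_m$, the complex product of the letters of $v$, and record that $0<|p|<1$ since every letter of $\mathcal{A}$ has modulus strictly between $0$ and $1$. Because $v\overline{v}=\overline{v}$, applying the substitution identity~(\ref{vw}) with the finite word $v$ and the infinite word $\overline{v}$ gives
\[
\phi(\overline{v})=\phi(v\overline{v})=\phi(v)+p\bigl(\phi(\overline{v})-1\bigr)=1+p\bigl(\phi(\overline{v})-1\bigr),
\]
where the last equality uses the hypothesis $\phi(v)=\phi(e_0)=1$. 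Rearranging gives $(1-p)\bigl(\phi(\overline{v})-1\bigr)=0$, and $|1-p|\ge 1-|p|>0$ forces $\phi(\overline{v})=1=\phi(e_0)$.

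Next I would write $\overline{v}=v_1 w$, where $w\in\mathcal{A}^\infty$ is simply $\overline{v}$ with its leading letter deleted and $v_1\in\mathcal{A}$. What the previous step established is exactly $\phi(v_1 w)=\phi(e_0)=1$, i.e.\ condition~\ref{tip} of Theorem~\ref{tiproot connectivity}. Invoking \Implies{tip}{root} then yields that $F_\mathcal{A}$ is root-connected, which is what we wanted.

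I do not expect a genuine obstacle here: morally, $\phi(v)=1$ says the periodic-orbit value $\phi(\overline{v})$ is a fixed point of the strict contraction $z\mapsto 1+p(z-1)$, whose only fixed point is $1$. The points that need care are the bookkeeping ones — that $\overline{v}$ and the series $\phi(\overline{v})$ are well defined (the latter converging because $|c_j|<1$), that $v\neq e_0$ is what makes $v_1$ and $p$ available, and that $|p|<1$ is what excludes $p=1$ — after which Theorem~\ref{tiproot connectivity} carries the rest. One could instead iterate~(\ref{vw}) to obtain $\phi(v^k)=1$ for every $k\ge1$ and let $k\to\infty$, noting that $\phi(v^k)$ is the order-$km$ partial sum of $\phi(\overline{v})$; the fixed-point version above is shorter.
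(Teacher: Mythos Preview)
Your proof is correct and follows essentially the same route as the paper: form the periodic word $\overline{v}$, apply the substitution identity~(\ref{vw}) together with $\phi(v)=1$ to see that $\phi(\overline{v})$ is a fixed point of $z\mapsto 1+p(z-1)$, conclude $\phi(\overline{v})=1$ since $p\neq 1$, and invoke Theorem~\ref{tiproot connectivity}. The only differences are cosmetic: you justify $p\neq 1$ via $|p|<1$ and explicitly peel off the first letter to match condition~\ref{tip}, whereas the paper asserts $v$ ``is neither $1$ nor $0$'' and appeals to Theorem~\ref{tiproot connectivity} directly.
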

\begin{proof}
 By applying equation~(\ref{vw}) to the tip point of $vvv\dots=\overline{v}\in \mathcal{A}^\infty$ we have that 
 \begin{align*}
 	\phi(\overline{v})&=\phi(v\overline{v})\\
 	&=\phi(v)+v\cdot(\phi(\overline{v})-1)\\
 	&=\phi(e_0)+v\cdot(\phi(\overline{v})-1)\\
 	&=1+v\cdot(\phi(\overline{v})-1),
  \end{align*}
i.e. $\phi(\overline{v})-1=v\cdot(\phi(\overline{v})-1)$. So~$\phi(\overline{v})-1=0$ since the complex product $v=v_1\cdot v_2\cdot\ldots$ is neither~$1$~nor~$0$. Therefore, tip-to-root connectivity $\phi(\overline{v})=1=\phi(e_0)$ takes place and by theorem~\ref{tiproot connectivity} we have that the tipset $F_\mathcal{A}$ is root-connected.
\end{proof}

For each level~$m$ there are~$n^m$ finite words~$v=v_1v_2\dots v_m\in\mathcal{A}^m$ of length~$m$ over the $n$-ary alphabet~$\mathcal{A}$, so for a given one-parameter family of $n$-ary complex trees $T_\mathcal{A}(z)$ it is convenient to define the \textbf{root connectivity set of order}~$m$ as
\begin{equation}
	\mathcal{M}_0(m):=\{z\in\mathcal{M}: \exists v\in \mathcal{A}(z)^m \text{ s.t. } \phi(v)=0\}. 
\end{equation}
The node-to-root connectivity theorem implies that $\mathcal{M}_0(m)$ is contained in~$\mathcal{M}_0$. Moreover, for~$m$~large, the root connectivity set of order~$m$ approximates $\mathcal{M}_0$ since $\lim_{m\to\infty}\mathcal{A}(z)^m=\mathcal{A}(z)^\infty$, i.e. $\lim_{m\to\infty}\mathcal{M}_0(m)=\mathcal{M}_0$. This gives us an algebraic method which is exact but rather slow when compared to our algorithm covered in~\cite{espigule2019ternary} that was used to generate the root connectivity sets~$\mathcal{M}_0$ in figures~\ref{rootmap}~and~\ref{Msetbinary}.

\subsection{Analytic Region $\mathcal{M}_2\subseteq\mathcal{M}$}\label{sec:M2}

The algebraic method stated in section~\ref{M} to obtain points $z$ in the unstable set~$\mathcal{M}$ to generate approximate pictures of~$\mathcal{M}$  is exact but rather slow. Nonetheless there is an analytic method based on a lemma stated below, lemma~\ref{a>2}, that can be applied 
to obtain an analytic region $\mathcal{M}_2\subset\mathcal{M}$.
\\

\noindent Following Jun Kigami's definition of post-critically finite~self-similar structure~\cite{kigami2001analysis}, let the~\textbf{critical set}~$O_\mathcal{A}$ of a complex tree~$T_\mathcal{A}$ be defined as
\begin{align}
	O_\mathcal{A}&:=\{w\in \mathcal{A}^\infty :  \phi(w)\in O_{F_\mathcal{A}}\}, \quad\text{ where $O_{F_\mathcal{A}}$ is the overlap set defined in~(\ref{O}}),
\end{align}
Then we call an $n$-ary complex tree~$T_\mathcal{A}$ \textbf{post-critically finite} (p.c.f.~tree for short) if and only if the \textbf{post critical set}~$P_\mathcal{A}$ defined as
\begin{align}
	P_\mathcal{A}&:=\bigcup_{m\geq1}\sigma^m(O_\mathcal{A}),\label{pcfset}	
\end{align}
is finite, where here $\sigma(w_1w_2w_3\dots):=w_2w_3\dots$ is the one-sided \textbf{shift-map}, and $\sigma^m(O_\mathcal{A})$ is $\sigma$ applied~$m$ times to infinite words $w\in O_\mathcal{A}$. For example, the ternary tree in figure~\ref{upternary} is a p.c.f.~tree since we have that $O_\mathcal{A}=\{13\overline{2},21\overline{2},23\overline{2},31\overline{2}\}$, and therefore $P_\mathcal{A}=\{3\overline{2},\overline{2},1\overline{2}\}$ is finite.
The \textbf{open set condition} (OSC for short), defined by Moran in 1946 and rediscovered by Hutchinson in~\cite{hutchinson1981fractals}, holds if there exists a bounded non empty open set~$U\subset \mathbb{C}$ such that
\begin{equation}\label{osc}
	f_j(U)\subset U \quad \forall j\in \mathcal{A} \quad \text{and}\quad f_j(U)\cap f_k(U)=\emptyset \quad \text{for}\quad j\neq k.
\end{equation}
The OSC has become a standard condition for computing the Hausdorff dimension of self-similar sets. In particular, if a tipset of a complex tree~$T\{c_1,c_2,\dots,c_n\}$ satisfies the open set condition, then its Hausdorff dimension is given by the unique positive number $\alpha=dim_H(F_\mathcal{A})$ that satisfies the following equation
\begin{equation}\label{sim-value}
	|c_1|^\alpha+|c_2|^\alpha+\dots+|c_n|^\alpha=\sum_{j=1}^n |c_j|^\alpha=1.
\end{equation}
If~$T_\mathcal{A}$ is a p.c.f.~tree then the OSC always holds for its tipset~$F_\mathcal{A}$ since the tipset itself is a p.c.f.~self-similar structure, see~\cite{kigami2001analysis}. If the OSC does not hold for a given tree $T\{c_1,c_2,\dots,c_n\}$ then we have that $dim_H(F_\mathcal{A})\neq \alpha$. If that is the case, we call~$\alpha$ the \textbf{similarity dimension} (also known as sim-value~\cite{edgar2007measure}) to distinguish it from the actual Hausdorff dimension. The following lemma shows that~$\alpha$ provides an upper bound for the existence of p.c.f. trees. 

\begin{lemma}\label{a>2}
\textit{Let $F_\mathcal{A}\subset\mathbb{C}$ be a tipset of a tree~$T_\mathcal{A}$. If its similarity dimension exceeds~$ \alpha > 2 $ then $T_\mathcal{A}$ is a non-p.c.f.~tree.}
\end{lemma}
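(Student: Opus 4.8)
The plan is to argue by contraposition: assume $T_\mathcal{A}$ is p.c.f.\ and deduce that its similarity dimension satisfies $\alpha \leq 2$. The argument is a short chain of three facts, all of which are already recorded in the paragraph preceding the lemma. First, if $T_\mathcal{A}$ is p.c.f., then by definition its post-critical set $P_\mathcal{A}$ is finite, so the tipset $F_\mathcal{A}$ is a post-critically finite self-similar structure in Kigami's sense, and such structures satisfy the open set condition~(\ref{osc}); this is the role of the citation to~\cite{kigami2001analysis}. Second, with the OSC in force for the IFS~(\ref{ifs}), Moran's theorem (rediscovered by Hutchinson~\cite{hutchinson1981fractals}) identifies the Hausdorff dimension of $F_\mathcal{A}$ with the similarity dimension, i.e.\ $dim_H(F_\mathcal{A}) = \alpha$, where $\alpha$ is the unique positive root of $\sum_{j=1}^n |c_j|^\alpha = 1$ from~(\ref{sim-value}). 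Third, $F_\mathcal{A} \subset \mathbb{C}$ is a subset of the Euclidean plane, and the Hausdorff dimension of any planar set is at most $2$.

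Combining the three, a p.c.f.\ tree $T_\mathcal{A}$ would satisfy $\alpha = dim_H(F_\mathcal{A}) \leq 2$; contrapositively, $\alpha > 2$ forces $T_\mathcal{A}$ to be non-p.c.f., which is exactly the statement of the lemma. I would preface this with the observation that $\alpha$ is well defined: under the standing hypothesis $0 < |c_j| < 1$ the map $t \mapsto \sum_{j} |c_j|^t$ is continuous and strictly decreasing, equal to $n$ at $t = 0$ and tending to $0$ as $t \to \infty$, so~(\ref{sim-value}) has a unique non-negative solution, positive as soon as $n \geq 2$; moreover $\alpha > 2$ is equivalent to $\sum_j |c_j|^2 > 1$, which makes the hypothesis transparent in concrete families.

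There is no real combinatorial difficulty here: all the work sits in the cited theorems. The one implication that warrants a word is the first, that a p.c.f.\ tree genuinely satisfies the OSC. Since the excerpt already asserts this with a reference, in the write-up I would simply invoke Kigami's result (a p.c.f.\ self-similar structure admits a bounded nonempty open set witnessing~(\ref{osc})) rather than reconstruct its proof, and then run the dimension estimate above. Everything else is immediate, so the expected obstacle is purely expository — deciding how much of the Kigami and Moran background to restate versus cite.
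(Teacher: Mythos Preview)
Your proposal is correct and matches the paper's own proof essentially verbatim: assume $T_\mathcal{A}$ is p.c.f., invoke the OSC via Kigami to get $dim_H(F_\mathcal{A})=\alpha$, and contradict $\alpha>2$ with the planar bound $dim_H(F_\mathcal{A})\le 2$. The only difference is cosmetic---you frame it as contraposition and add remarks on the well-definedness of $\alpha$ and the equivalent inequality $\sum_j |c_j|^2>1$, while the paper states the contradiction in one line.
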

\begin{proof}
	Suppose that $T_\mathcal{A}$ is p.c.f.~and $\alpha>2$ then we would have that $dim_H(F_\mathcal{A})=\alpha>2$ which is a contradiction since $F_\mathcal{A}\subset \mathbb{C}$ and $dim_H(F_\mathcal{A})\leq dim_H(\mathbb{C})=2$.
\end{proof}

\noindent For example, if we apply this lemma to the pair families $T\{z,1/2,1/4z\}$~and~$T\{z,-1/2,1/4z\}$ with unstable sets~$\mathcal{M}$ shown in figures~\ref{region1}~and~\ref{region2} we have 
\begin{equation}
|z|^\alpha+1/2^\alpha+(1/4|z|)^\alpha=1.
\end{equation} 
At $|z|=1/2$, the similarity dimension of $F_\mathcal{A}(z)$ gets its minimum, $\alpha=\log (3)/\log (2)\approx1.585$. For $|z|>1/2$ we have
\begin{equation}
 |z|=\left(2^{-\alpha -1} \left(\sqrt{\left(2^{\alpha }-3\right) \left(2^{\alpha
   }+1\right)}+2^{\alpha }-1\right)\right)^{1/\alpha },
 \end{equation}
 so $|z|$ increases continuously as a function of the similarity dimension that reaches $\alpha=2$ at $|z|=\tau/2$, where $\tau$ is the golden ratio.
 For $|z|<1/2$ we have
 \begin{equation}
 |z|=\left(2^{-\alpha -1} \left(-\sqrt{\left(2^{\alpha }-3\right) \left(2^{\alpha
   }+1\right)}+2^{\alpha }-1\right)\right)^{1/\alpha }
\end{equation}
which decreases continuously as a function of the similarity dimension that reaches $\alpha=2$ at $|z|=1/2\tau$. Therefore, the open annuli regions $\{z:1/4<|z|<1/2\tau\}$~and~$\{z:\tau/2<|z|<1\}$ are contained in the unstable set~$\mathcal{M}$ because by lemma~\ref{a>2} trees in these pair of regions where $\alpha>2$ are non-p.c.f., and consequently unstable. Further details regarding the Hausdorff dimension of connected self-similar sets generated by the families $T\{z,1/2,1/4z\}$~and~$T\{z,-1/2,1/4z\}$ are available at~\cite{espigule2019ternary}. For arbitrary families of complex trees $T_\mathcal{A}(z)=T\{c_1(z),c_2(z),\dots,c_n(z)\}$, the analytic regions~$\mathcal{M}_2\subset\mathcal{M}$ that we are looking for by applying this same technique are simply given by
 \begin{equation}
 \mathcal{M}_2:=\{z\in\mathcal{R} : 	1<|c_1(z)|^2+|c_2(z)|^2+\dots+|c_n(z)|^2\}.
 \end{equation}
 
See below the regions~$\mathcal{M}_2\subset\mathcal{M}$ for the families of binary trees, $T\{z,1+z^2\}$, $T\{z,z+1/(1+z)\}$, and $T\{z,1+z+z^2\}$, obtained in~(\ref{z 1+z^2}),~(\ref{z z+1/(1+z)}), and~(\ref{z 1+z+z^2}) respectively.

 \begin{figure}[H]
 \centering
  \includegraphics[width=6.5in]{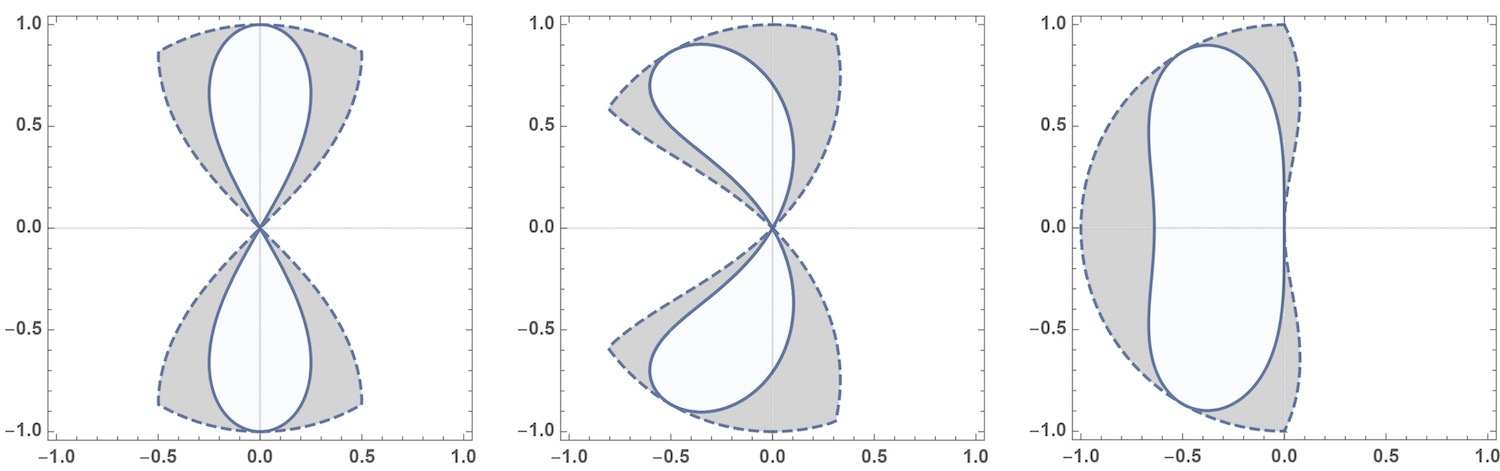}
  \caption{Regions $\mathcal{R}$ shown in fig.~\ref{regionsbinary} overlaid with regions~$\mathcal{M}_2\subset\mathcal{M}$ entirely composed of non-p.c.f.~unstable trees.
    }
  \label{regionsbinary_stable}
\end{figure}

\section{Further Results}\label{sec:further_results}

A tipset~$F_\mathcal{A}$ is a bounded self-similar set in~$\mathbb{C}$, the infinite series ${\phi(w)=\sum_{k=0}^\infty w|_k}$ converges for any $w\in \mathcal{A}^\infty$, and the sequence of partial sums defined by $\phi(w_{|m})=\sum_{k=0}^m w_{|k}$ is a Cauchy sequence that converges to $\phi(w)$. Moreover we have that for $r:=\sup\{|c_1|,|c_2|,\dots,|c_n|\}$, the inequality $|\sum_{k=0}^\infty w_{|k}|\leq\sum_{k=0}^\infty|w_{|k}|\leq\sum_{k=0}^\infty r^k$ holds, and therefore $n$-ary complex trees $T_\mathcal{A}$ are always contained in a disk centered at $\phi(e_0)=1$ with radius $1/(1-r)-1$, i.e. $T_\mathcal{A}\subset D$ where ${D=\{z\in\mathbb{C}:|z-1|\leq 1/(1-r)-1\}}$. A straightforward method to check if a given tipset~$F_\mathcal{A}$ is disconnected consists in showing that the bounding set~$D^k$ of order $k\in\mathbb{N}$ defined as
\begin{equation}
	D^k:=f_1(D^{k-1})\cup f_2(D^{k-1})\cup\dots\cup f_n(D^{k-1}) \text{, where }D^1=f_1(D)\cup f_2(D)\cup\dots\cup f_n(D)
\end{equation}
is disconnected. As soon as we check that for a certain~$k\in\mathbb{N}$ we have that the tipset~$F_\mathcal{A}\subset D^k$ is also disconnected.
The following necessary and sufficient condition for having a connected self-similar set, in our case a tipset~$F_\mathcal{A}$, was already reported in~1992 by Thierry Bousch~\cite{bousch1992quelques}. A tipset $F_\mathcal{A}$ is \textbf{connected} if and only if the graph $G_F$ over the $n$-ary alphabet $\mathcal{A}$ defined as
\begin{equation*}
	(j,k)\in G_F \quad \Longleftrightarrow \quad F_{j\mathcal{A}}\cap F_{k\mathcal{A}}\neq\emptyset \qquad j,k\in\mathcal{A}
\end{equation*}
is connected. Notice that $F_{j\mathcal{A}}\cap F_{k\mathcal{A}}\neq\emptyset$ implies that there is at least a pair of words $a,b\in \mathcal{A}^\infty$ with $a_1=j$ and $b_1=k$ such that $\phi(a)=\phi(b)$.
Therefore $G_F$ can be also defined as
\begin{equation}\label{GF}
	(j,k)\in G_F \quad \Longleftrightarrow \quad \exists a,b\in \mathcal{A}^\infty\quad s.t.\quad \phi(a)=\phi(b)\ ,\ a_1=j \ ,\text{ and }\  b_1=k.
\end{equation}

\subsection{Fractal Dendrites}

A priori if $G_F$ is connected we do not know if $F_\mathcal{A}$ is connected with $\mathbf{int}(F_{j\mathcal{A}}\cap F_{k\mathcal{A}})\neq\emptyset$ for some $j,k\in \mathcal{A}$. A broad class of trees covered so far are trees with a p.c.f.~tipset~$F_\mathcal{A}$ homeomorphic to a \textbf{fractal dendrite}, where by fractal dendrite we mean a self-similar set which is a tree-like topological space, compact, connected and simply connected subset of the complex plane having empty interior. Recall that if $F_\mathcal{A}$ is simply connected it implies that $\mathbb{C}\backslash F_\mathcal{A}$ is also simply connected.

\begin{figure}[H]
\centering
  \includegraphics[width=6.5in]{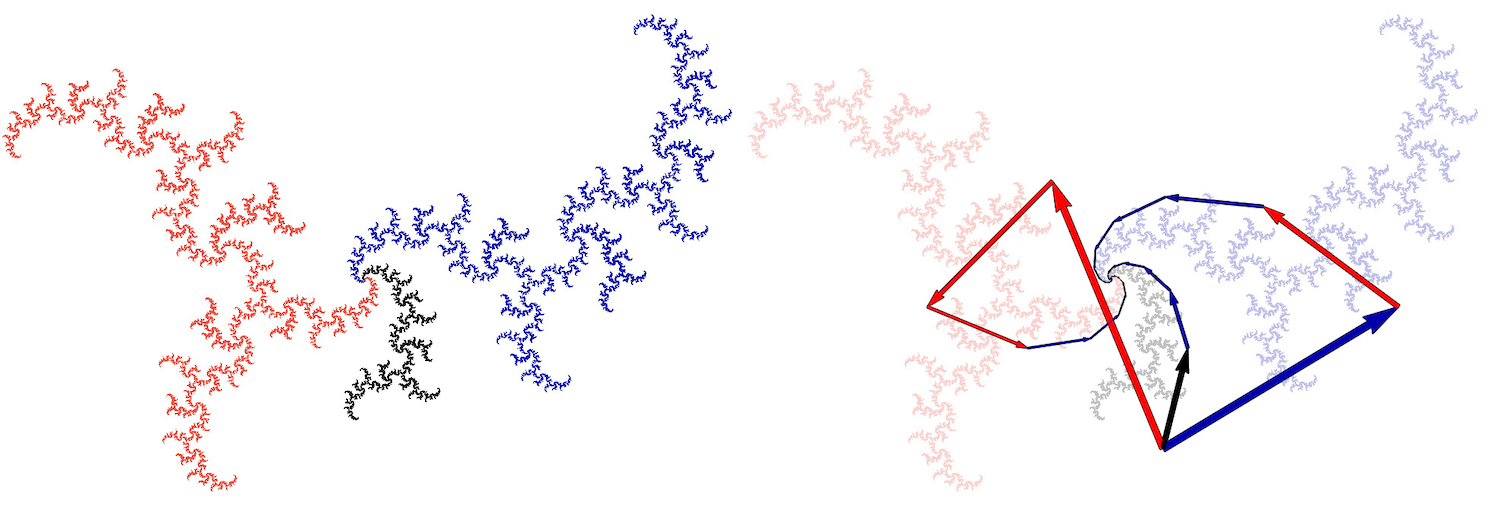}
  \caption{Fractal dendrite, $O_F=I_F=\{x\}$ where $x=\phi(222\overline{3})=\phi(1\overline{3})=\phi(32\overline{3})\approx 0.89+i0.35$, the alphabet is $\mathcal{A}\approx\{0.053 +i0.21,-0.23+i0.566,0.5+i0.3\}$, and~$Q_\mathcal{A}=\{222\overline{3}\sim1\overline{3},1\overline{3}\sim32\overline{3},222\overline{3}\sim32\overline{3}\}$. 
  On the right we have included all branch-paths meeting at~$x$. 
  }
  \label{totally_dendritic}
\end{figure}

\begin{theo}\label{thmdendrite}
\textbf{(Fractal dendrites).} \textit{Let $F_\mathcal{A}$ be connected and $\mathbf{card}(F_j\cap F_k)\leq1$ for all $j,k\in \mathcal{A}$. Then $F_\mathcal{A}$ is a fractal dendrite if and only if $F\backslash (F_j\cap F_k)$ is disjoint for all $j,k\in \mathcal{A}$ such that $F_j\cap F_k\neq\emptyset$.}
\begin{proof}
Observe that if $F_j\cap F_k\neq\emptyset$ then $\mathbf{card}(F_j\cap F_k)=1$, i.e. we have a singleton $\{x_{jk}\}=F_j\cap F_k$. Consequently for any pair of tip points~$\phi(jv)\in F_j\backslash \{x_{jk}\}$~and~$\phi(kw)\in F_k\backslash \{x_{jk}\}$ there is always an arc~$\mathcal{S}\subset F_j\cup F_k$ connecting them such that $x_{jk}\in\mathcal{S}$. If $F\backslash \{x_{jk}\}$ is connected then it means that $x_{jk}$ is not a cutpoint and therefore there exists another arc~$\mathcal{S}'\subset F\backslash \{x_{jk}\}$ connecting $\phi(jv)$~and~$\phi(kw)$. Consequently we have that $F_\mathcal{A}$ is not a dendrite since $\mathcal{S}\cup\mathcal{S}'$ is a closed curve. Now we want to see that if $F_\mathcal{A}$ is a dendrite then $F\backslash \{x_{jk}\}$ is disjoint. Let $\phi(ujv)$~and~$\phi(ukw)$ be any pair of tip points of $F_\mathcal{A}$ with common prefix~$u\in \mathcal{A}^*$, these pair of points are contained in~$F_u$ so by self-similarity we can reframe them into~$F=f_u^{-1}(F_u)$ where now the tip points are~$\phi(jv)=f_u^{-1}(\phi(ujv))$ and~$\phi(kw)=f_u^{-1}(\phi(ukw))$. If $F_j\cap F_k\neq\emptyset$ then~$x_{jk}$ must be a cutpoint separating $\phi(jv)$~and~$\phi(kw)$, consequently $F\backslash \{x_{jk}\}=F\backslash (F_j\cap F_k)$ is disjoint. 
\end{proof}
\end{theo}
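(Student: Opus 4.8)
The plan is to recast the statement via the standard characterization of dendrites and to prove the two implications separately. I first record the topology. A connected self-similar set is a Peano continuum — compact by construction, connected by hypothesis, and locally connected by Hata's theorem — hence arcwise connected; and if $F_\mathcal A\setminus\{x\}$ is connected for some point $x$, then, being an open, locally arcwise connected, connected subset of $F_\mathcal A$, it is again arcwise connected. Recall that a Peano continuum is a dendrite exactly when it contains no simple closed curve, equivalently when it is uniquely arcwise connected, and that a topological dendrite lying in $\mathbb C$ is automatically compact, connected, simply connected, of empty interior (it is $1$-dimensional) and tree-like. Since $\mathbf{card}(F_j\cap F_k)\le1$, I write $\{x_{jk}\}=F_j\cap F_k$ whenever this set is nonempty, and I call $(\star)$ the hypothesis that $F_\mathcal A\setminus\{x_{jk}\}$ is disconnected for every $j\ne k$ with $F_j\cap F_k\neq\emptyset$. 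With this vocabulary the theorem reduces to the equivalence: \emph{$F_\mathcal A$ contains a simple closed curve if and only if $(\star)$ fails.}

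For ``$(\star)$ fails $\Rightarrow$ simple closed curve'' (equivalently, ``fractal dendrite $\Rightarrow(\star)$''), I would argue as in the paper's first paragraph. If $F_\mathcal A\setminus\{x_{jk}\}$ is connected for some $j\ne k$, choose $a\in F_j\setminus\{x_{jk}\}$ and $b\in F_k\setminus\{x_{jk}\}$ (possible since each $F_j$ is a nondegenerate Peano continuum) and join $a$ to $x_{jk}$ by an arc inside $F_j$ and $x_{jk}$ to $b$ by an arc inside $F_k$; these meet only at $x_{jk}$ because $F_j\cap F_k=\{x_{jk}\}$, so their union is an arc $\mathcal S$ from $a$ to $b$ through $x_{jk}$. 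Since $F_\mathcal A\setminus\{x_{jk}\}$ is arcwise connected, it also contains an arc $\mathcal S'$ from $a$ to $b$ missing $x_{jk}$; then $\mathcal S\neq\mathcal S'$, and two distinct arcs with common endpoints always contain a simple closed curve, so $F_\mathcal A$ is not a dendrite.

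For the converse, let $C\subset F_\mathcal A$ be a simple closed curve. I would first reduce by self-similarity: since $\operatorname{diam}(F_v)\le r^{|v|}\operatorname{diam}(F_\mathcal A)$ with $r=\max_j|c_j|<1$ while $\operatorname{diam}(C)>0$, and since two distinct level-$m$ pieces meet in at most one point (so $C$ lies in at most one of them), there is a longest word $u$ with $C\subseteq F_{u\mathcal A}$; replacing $C$ by $f_u^{-1}(C)$, I may assume $C$ lies in no single first-level piece. Then the finite set $E$ of junction points meets $C$ — otherwise $C$ would lie in one piece, since $F_\mathcal A\setminus E$ is the disjoint union of the relatively clopen sets $F_l\setminus E$ — and $C\cap E$ cuts $C$ into $m\ge2$ open sub-arcs, each contained in a single piece. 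The visited pieces are not all equal, so going around $C$ it must cross from some $F_j$ into some $F_k$, necessarily through $x_{jk}$; and since a simple closed curve meets each point once, no unordered pair $\{j,k\}$ can label two crossings. Hence the crossings trace a closed edge-simple walk in the adjacency graph of the pieces, which contains a cycle $a_1a_2\cdots a_p a_1$ with $p\ge3$, and $C$ runs through the \emph{distinct} corners $x_{a_1a_2},x_{a_2a_3},\dots,x_{a_pa_1}$, split by them into sub-arcs $C_j\subseteq F_{a_j}$.

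The main obstacle is to derive a contradiction from $(\star)$ here — to exhibit a corner $x_{a_ia_{i+1}}$ that is a non-separating point of $F_\mathcal A$. I would assume the contrary, so each $F_\mathcal A\setminus\{x_{a_ia_{i+1}}\}$ is disconnected and $F_\mathcal A=A_i\cup B_i$ with $A_i,B_i$ nondegenerate subcontinua meeting only at $x_{a_ia_{i+1}}$; as $C\setminus\{x_{a_ia_{i+1}}\}$ is a connected set avoiding the cut point, $C\subseteq B_i$. A short connectedness argument, using that the corners are pairwise distinct, then gives $A_i\cap A_j=\emptyset$ and $A_i\cap C=\{x_{a_ia_{i+1}}\}$ for $i\ne j$, so $F_\mathcal A$ is the loop $C$ with disjoint ``ears'' $A_1,\dots,A_p$ attached at its distinct corners. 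The device I would use to rule this out is self-similarity: each $F_{a_j}$ is a rescaled copy of $F_\mathcal A$ on which $(\star)$ holds again, it carries the sub-arc $C_j$ together with the adjacent ears, and propagating this picture down through the levels — the piece diameters tending to $0$ while the loop retains its fixed positive diameter — should be impossible, morally because a genuine loop cannot persist across all scales inside pieces that are themselves, inductively, dendrites. Stated cleanly, the point to be pinned down is that $(\star)$ forces every cycle in the adjacency graph of the pieces to be \emph{degenerate}, all its corners being identified — precisely as in Figure~\ref{totally_dendritic}, where a triangle of pieces meets at one point yet the tipset is still a dendrite. Once no simple closed curve survives, $F_\mathcal A$ is a uniquely arcwise connected Peano continuum, i.e.\ a topological dendrite, and by the first paragraph a fractal dendrite.
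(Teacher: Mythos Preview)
Your treatment of the direction ``fractal dendrite $\Rightarrow(\star)$'' (equivalently, ``$(\star)$ fails $\Rightarrow$ not a dendrite'') is correct and matches the paper's argument: pick $a\in F_j\setminus\{x_{jk}\}$ and $b\in F_k\setminus\{x_{jk}\}$, build one arc through $x_{jk}$ inside $F_j\cup F_k$ and a second arc avoiding $x_{jk}$ inside the connected set $F_\mathcal A\setminus\{x_{jk}\}$, then extract a simple closed curve from their union. Your topological preamble (Hata's theorem for local connectedness, arcwise connectedness of the complement of a non-cut point) supplies exactly the justifications the paper leaves implicit.

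For the converse ``$(\star)\Rightarrow$ dendrite'' you go considerably further than the paper, but the argument remains incomplete --- and you say so yourself. Your reduction is sound: a simple closed curve $C$, pushed down to the first scale at which it no longer fits in a single piece, yields a closed edge-simple walk of length $\ge 3$ in the adjacency graph, hence a cycle $a_1\cdots a_pa_1$ whose corners are \emph{distinct} points of $C$. The gap is the final step, showing that at least one such corner fails to separate $F_\mathcal A$. The ``ears'' decomposition and the appeal to self-similarity (``a genuine loop cannot persist across all scales'') are heuristic rather than a proof, and your reformulation that ``$(\star)$ forces every cycle in the adjacency graph to be degenerate'' is essentially the statement you are trying to establish, so invoking it is circular. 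What is missing is a concrete mechanism --- for instance, an argument that the connected arc $C\setminus\{x_{a_1a_2}\}$ together with the chain $F_{a_2},\dots,F_{a_p},F_{a_1}$ (linked through the \emph{other} corners, none equal to $x_{a_1a_2}$) already reaches every component of $F_\mathcal A\setminus\{x_{a_1a_2}\}$.

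It is worth recording that the paper's own proof does not cover this direction either: both of its paragraphs establish ``dendrite $\Rightarrow(\star)$'' --- the first by contrapositive, the second directly --- and the implication ``$(\star)\Rightarrow$ dendrite'' is never argued. So your proposal correctly locates the genuine difficulty that the paper passes over.
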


\begin{prop}
Let $F_\mathcal{A}$ be totally connected, if the overlap set~$O_{F_\mathcal{A}}$ and the intersection set~$I_{F_\mathcal{A}}$ defined in~(\ref{O})~and~(\ref{I}) are a singleton, $O_{F_\mathcal{A}}=I_{F_\mathcal{A}}=\{x\}$, then $F_\mathcal{A}$ is a fractal dendrite.
\end{prop}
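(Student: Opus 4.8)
The plan is to reduce the statement to Theorem~\ref{thmdendrite}. First I would unpack the hypothesis $O_{F_\mathcal{A}}=I_{F_\mathcal{A}}=\{x\}$. Since $O_{F_\mathcal{A}}=\bigcup_{j\neq k}F_{j\mathcal{A}}\cap F_{k\mathcal{A}}=\{x\}$, every pairwise intersection $F_{j\mathcal{A}}\cap F_{k\mathcal{A}}$ is either empty or equal to $\{x\}$, so in particular $\mathbf{card}(F_{j\mathcal{A}}\cap F_{k\mathcal{A}})\le 1$ for all $j,k\in\mathcal{A}$; and since $I_{F_\mathcal{A}}=F_{1\mathcal{A}}\cap\dots\cap F_{n\mathcal{A}}=\{x\}\neq\emptyset$, the point $x$ belongs to every first-level piece, so in fact $F_{j\mathcal{A}}\cap F_{k\mathcal{A}}=\{x\}$ for every $j\neq k$. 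Hence the graph $G_F$ of~(\ref{GF}) is the complete graph on the vertices of $\mathcal{A}$, so it is connected, and therefore $F_\mathcal{A}$ is connected by the connectedness criterion recalled above (Bousch). Both standing assumptions of Theorem~\ref{thmdendrite} are thus in force, and it remains only to check the dendrite criterion stated there: that $F_\mathcal{A}\backslash(F_{j\mathcal{A}}\cap F_{k\mathcal{A}})=F_\mathcal{A}\backslash\{x\}$ is disconnected for every admissible pair $j\neq k$ (all of which give the same set).

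To see this I would use that the self-similarity decomposition $F_\mathcal{A}=F_{1\mathcal{A}}\cup\dots\cup F_{n\mathcal{A}}$ restricts to a partition
\[
	F_\mathcal{A}\backslash\{x\}=\bigcup_{j\in\mathcal{A}}\bigl(F_{j\mathcal{A}}\backslash\{x\}\bigr),
\]
the pieces being pairwise disjoint precisely because $F_{j\mathcal{A}}\cap F_{k\mathcal{A}}=\{x\}$ for $j\neq k$. Each $F_{j\mathcal{A}}$ is compact, so each $F_{j\mathcal{A}}\backslash\{x\}$ is closed in $F_\mathcal{A}\backslash\{x\}$; since there are only finitely many of them, pairwise disjoint and covering the whole space, each one is the complement of a closed set and hence also open in $F_\mathcal{A}\backslash\{x\}$. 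Moreover each piece is non-empty: the hypothesis $O_{F_\mathcal{A}}=\{x\}\neq\emptyset$ forces $n\ge 2$, and since the maps $f_j(z)=1+c_jz$ have pairwise distinct fixed points $1/(1-c_j)$, the set $F_\mathcal{A}$ cannot be a single point, so each similar copy $F_{j\mathcal{A}}=f_j(F_\mathcal{A})$ has more than one point and $F_{j\mathcal{A}}\backslash\{x\}\neq\emptyset$. Thus $F_\mathcal{A}\backslash\{x\}$ is a disjoint union of $n\ge 2$ non-empty relatively clopen subsets, hence disconnected, and Theorem~\ref{thmdendrite} yields that $F_\mathcal{A}$ is a fractal dendrite.

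I do not expect a genuine obstacle here: the argument is essentially bookkeeping of the singleton hypothesis combined with the already-established characterization of dendrites. The only steps requiring a little care are the non-degeneracy of the pieces (ruling out the possibility that $F_\mathcal{A}$ is a single point, which would force a common fixed point of all the $f_j$) and the elementary topological fact that a finite cover of a space by pairwise disjoint relatively closed sets consists of relatively clopen sets.
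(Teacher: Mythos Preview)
Your proof is correct and follows essentially the same route as the paper: both reduce the proposition to Theorem~\ref{thmdendrite} by observing that all pairwise intersections coincide with the single point $x$, so that $F_\mathcal{A}\backslash\{x\}$ splits into the $n$ pieces $F_{j\mathcal{A}}\backslash\{x\}$. The paper's argument is a two-line sketch that simply asserts this and invokes the theorem; you have supplied the details the paper omits (connectedness via the graph $G_F$, the clopen-partition argument for disconnectedness, and the non-degeneracy check that the pieces are non-empty).
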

\begin{proof}
For all $j,k\in \mathcal{A}$ the intersections $F_{j\mathcal{A}}\cap F_{k\mathcal{A}}$ take place at the same point $x$, so $F_\mathcal{A}\backslash \{x\}$ is disjoint and by theorem~\ref{thmdendrite} we have that $F_\mathcal{A}$ is a fractal dendrite.
\end{proof}

\subsection{Piece-to-Piece Connectivity}
In order to rescale the intersection of pieces $F_{u\mathcal{A}}=f_u(F_\mathcal{A})$~and~$F_{v\mathcal{A}}=f_v(F_\mathcal{A})$ to the size of the original self-similar set~$F_\mathcal{A}$,  where $u,v\in\mathcal{A}^*$ with $u_1\neq v_1$, Bandt and Graf~\cite{bandt1992self} introduced the concept of a neighbor map $h_{u,v}:=f_u^{-1}f_v$. In the context of complex trees we have from~eq.~(\ref{piece}) that $f_v(z):=\phi(v)+v\cdot(z-1)$ and $f_u^{-1}(z)=(z-\phi(u))/u+1$ so the neighbor map~$h_{u,v}$ can be rewritten as
\begin{align}
	h_{u,v}(z):&=f_u^{-1}(f_v(z))\notag\\
	&=f_u^{-1}(\phi(v)+v\cdot(z-1))\notag\\
	&=(\phi(v)+v\cdot(z-1)-\phi(u))/u+1\notag\\
	&=1+(z-1)v/u+(\phi(v)-\phi(u))/u\label{hmap}
\end{align}
The example below shows how this map applies for a pair of small pieces intersecting tangentially.
\begin{figure}[H]
\begin{overpic}[width=.9\textwidth,tics=10]{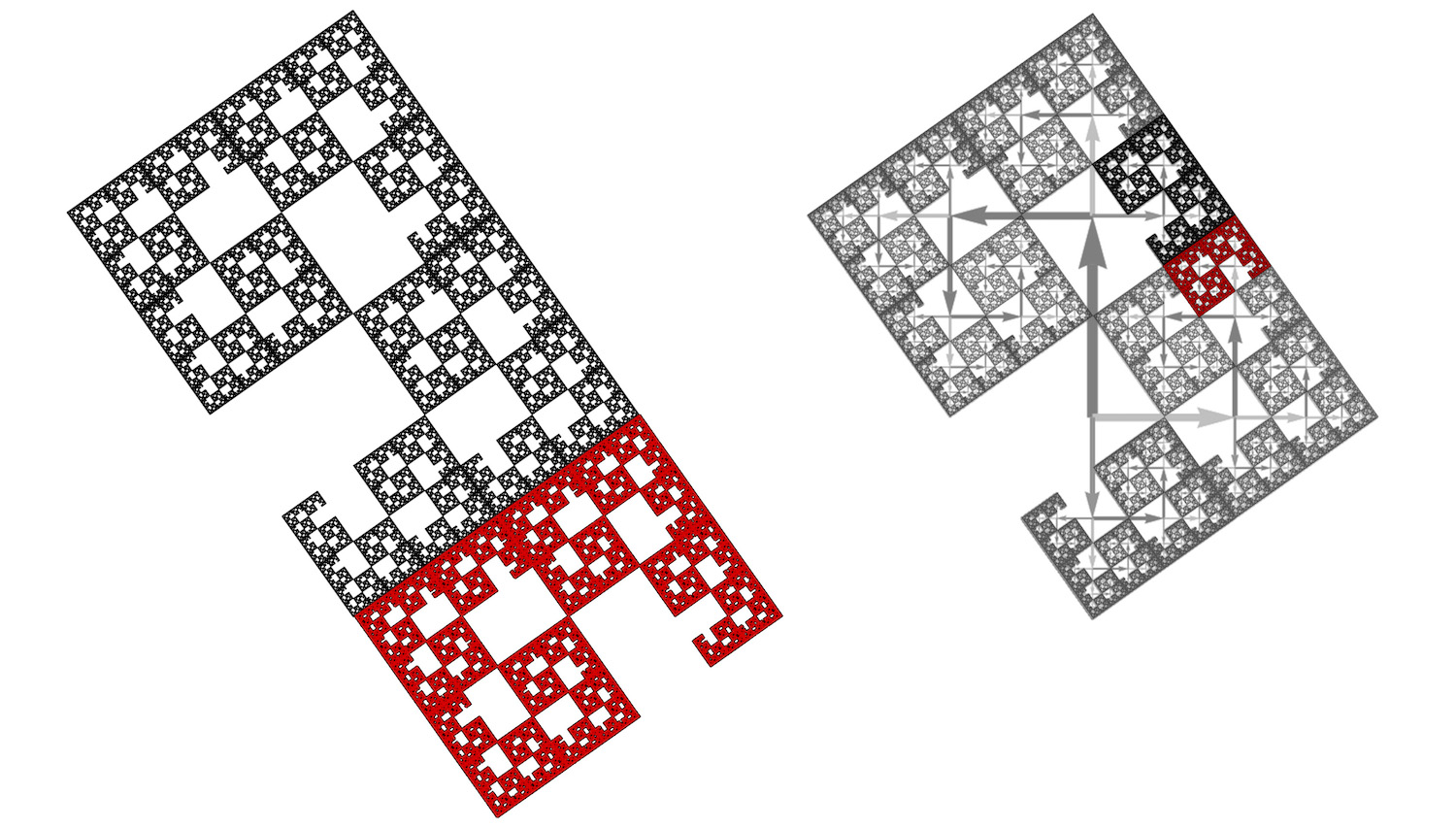}
 \put (1,25) {$F_\mathcal{A}(i/\sqrt{2})$}
 \put (6,5) {$h_{13,212}(F_\mathcal{A}(i/\sqrt{2}))$}
 \put (82,47) {$F_{13}$}
 \put (87,41) {$F_{212}$}
\end{overpic}
\centering
\caption{The neighbor set~$h_{13,212}(F_\mathcal{A}(i/\sqrt{2}))$ intersects the tipset of the unstable tree~$T_\mathcal{A}(i/\sqrt{2})$ in figure~\ref{unstable3}, the same way as piece $F_{13}$ intersects piece~$F_{212}$.
}
\label{pieces}
\end{figure}


\noindent The following theorem is a result of having $h_{u,v}=id$. It applies to nodes $\phi(u)$~and~$\phi(v)$ encoded by words~$u$~and~$v$, not necessarily of same length, such that their complex product $u_1\cdot~u_2\cdot~u_3\cdot\ldots=v_1\cdot~v_2\cdot~v_3\cdot\ldots$ is the same.

\begin{theo}\label{Piece-to-piece connectivity}
\textbf{(Piece-to-piece connectivity).} \textit{Let $u:=u_1u_2\dots u_{l-1} u_l$ and $v:=v_1v_2\dots v_{m-1}v_m$ be a pair of finite words with~$u_1\neq v_1$ and length $l$~and~$m$ respectively, then the following are equivalent:}
  \begin{enumerate}[label=(\arabic*),ref=(\arabic*)]
   \item $F_{u\mathcal{A}}=F_{v\mathcal{A}}$ \label{identity}
   \item $h_{u,v}=id$ \label{hid}
   \item $\phi(u)=\phi(v)$~and~$\phi(u')=\phi(v')$ where $u':=u_1u_2\dots u_{l-1}$ and $v':=v_1v_2\dots v_{m-1}$ \label{uv}
  \end{enumerate}
\end{theo}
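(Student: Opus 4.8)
The plan is to run the cycle \Implies{identity}{hid}, \Implies{hid}{uv}, \Implies{uv}{identity} --- equivalently, to prove \ref{identity}$\Leftrightarrow$\ref{hid} and \ref{hid}$\Leftrightarrow$\ref{uv}. The equivalence \ref{hid}$\Leftrightarrow$\ref{uv} will be a direct computation from the closed form of the neighbor map in~(\ref{hmap}) together with the one-step recursion for the geometric map; \ref{hid}$\Rightarrow$\ref{identity} is immediate; and \ref{identity}$\Rightarrow$\ref{hid} is the step that requires genuine work with the self-similar set $F_\mathcal{A}$ itself.

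First I would record the elementary identity $\phi(u)=\phi(u')+u$, where on the right $u$ denotes the complex product $u_1u_2\cdots u_l$; this is immediate from the definition of $\phi$, and similarly $\phi(v)=\phi(v')+v$. For \ref{hid}$\Leftrightarrow$\ref{uv}: by~(\ref{hmap}) we have $h_{u,v}(z)=1+(z-1)v/u+(\phi(v)-\phi(u))/u$, so $h_{u,v}=id$ holds if and only if the multiplier satisfies $v/u=1$ and the remaining affine term vanishes, i.e. if and only if $u=v$ as complex products and $\phi(u)=\phi(v)$. Using the recursion, if $u=v$ and $\phi(u)=\phi(v)$ then $\phi(u')=\phi(u)-u=\phi(v)-v=\phi(v')$; conversely, if $\phi(u)=\phi(v)$ and $\phi(u')=\phi(v')$ then $u=\phi(u)-\phi(u')=\phi(v)-\phi(v')=v$. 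Hence the pair of conditions in \ref{uv} is exactly equivalent to ``$u=v$ and $\phi(u)=\phi(v)$'', which is \ref{hid}.

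For \ref{hid}$\Rightarrow$\ref{identity}: $h_{u,v}=f_u^{-1}f_v=id$ means $f_u=f_v$ as maps, so $F_{u\mathcal{A}}=f_u(F_\mathcal{A})=f_v(F_\mathcal{A})=F_{v\mathcal{A}}$. For the converse \ref{identity}$\Rightarrow$\ref{hid}: applying $f_u^{-1}$ to $f_u(F_\mathcal{A})=F_{u\mathcal{A}}=F_{v\mathcal{A}}=f_v(F_\mathcal{A})$ gives $h_{u,v}(F_\mathcal{A})=F_\mathcal{A}$, so $h_{u,v}$ is a similarity fixing the tipset setwise. Since $c_1\neq c_2$, the set $F_\mathcal{A}$ cannot be a single point (a common fixed point of all the $f_j$ would force all $c_j$ equal), so $\mathrm{diam}(F_\mathcal{A})>0$; comparing $\mathrm{diam}(F_{u\mathcal{A}})=|u|\,\mathrm{diam}(F_\mathcal{A})$ with $\mathrm{diam}(F_{v\mathcal{A}})=|v|\,\mathrm{diam}(F_\mathcal{A})$ forces $|u|=|v|$, hence the multiplier $v/u$ of $h_{u,v}$ has modulus $1$ and $h_{u,v}$ is an orientation-preserving isometry of $\mathbb{C}$. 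A nontrivial translation cannot fix the bounded set $F_\mathcal{A}$, so it only remains to rule out that $h_{u,v}$ is a nontrivial rotation, after which $h_{u,v}=id$ and, by \ref{hid}$\Leftrightarrow$\ref{uv}, condition \ref{uv} holds as well.

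The main obstacle is exactly that last exclusion --- showing no nontrivial rotation can fix the tipset. I would attack it by exploiting the ordered, one-sided series structure $\phi(w)=\sum_{k\ge0}w_{|k}$ of $F_\mathcal{A}$ together with the hierarchy of pieces: matching a rotation-invariant $F_\mathcal{A}$ against its level-$k$ decomposition, whose pieces have diameters at most $r^k\,\mathrm{diam}(F_\mathcal{A})$ with $r=\max_j|c_j|<1$, should force an identification among the generators $c_j$ or among the nodes $\phi(\cdot)$ that contradicts the standing assumption $c_1\neq c_2$; splitting the rotation into an irrational-angle subcase (which produces a dense orbit, hence a full circle inside the compact set $F_\mathcal{A}$) and finitely many rational-angle subcases is the natural way to organize this. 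Everything else in the theorem --- the algebraic equivalence \ref{hid}$\Leftrightarrow$\ref{uv} and the implication \ref{hid}$\Rightarrow$\ref{identity} --- is routine.
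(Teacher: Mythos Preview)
Your cycle matches the paper's exactly: the paper runs \Implies{identity}{hid}, \Implies{hid}{uv}, \Implies{uv}{identity}, and your algebraic treatment of \ref{hid}$\Leftrightarrow$\ref{uv} via the one-step recursion $\phi(u)=\phi(u')+u$ and of \ref{hid}$\Rightarrow$\ref{identity} is the same as (indeed slightly more explicit than) the paper's. The only divergence is at \Implies{identity}{hid}. There the paper writes a single line --- ``$f_u^{-1}f_v(F_\mathcal{A})=F_\mathcal{A}$, i.e.~$h_{u,v}=id$'' --- with no further justification, whereas you correctly flag that $h_{u,v}(F_\mathcal{A})=F_\mathcal{A}$ does not by itself force $h_{u,v}=id$, reduce to excluding a nontrivial rotation, and sketch an attack. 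So on this step you are being \emph{more} careful than the paper, which simply asserts the implication.

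That said, your rotation-exclusion remains only a sketch, and the paper gives you nothing to lean on here. Be aware that tipsets with genuine rotational symmetry do occur in this framework --- e.g.\ for the $n$-gon families $T\{ce^{2\pi i/n},\dots,c\}$ one checks directly that $G:=F_\mathcal{A}-1=\bigcup_j c_jF_\mathcal{A}$ satisfies $e^{2\pi i/n}G=G$ --- so any argument must exploit the specific arithmetic of $h_{u,v}$ (multiplier $v/u$ a ratio of letter-products, affine part $(\phi(v)-\phi(u))/u$), not just abstract compactness of $F_\mathcal{A}$. Your irrational-angle subcase is on the right track: density forces $G$ to be a union of full circles about the rotation centre, and feeding that back into $G=\bigcup_j c_jF_\mathcal{A}$ pushes toward a common fixed point of the $f_j$, hence equal $c_j$'s. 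The rational-angle subcase, however, is \emph{not} handled by the ``diameters of level-$k$ pieces shrink'' heuristic you propose --- a finite cyclic symmetry is perfectly compatible with a hierarchy of shrinking pieces, as the $n$-gon examples show. If you want a complete proof of \Implies{identity}{hid} you will need a sharper combinatorial argument matching the rotation against the first-level decomposition; otherwise you should regard this implication as not fully established (in your write-up or the paper's).
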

\begin{proof}

\ref{identity}$\implies$\ref{hid} since $F_{u\mathcal{A}}=f_u(F_\mathcal{A})$~and~$F_{v\mathcal{A}}=f_v(F_\mathcal{A})$ we have that $f_u(F_\mathcal{A})=f_v(F_\mathcal{A})$ so $f_u^{-1}f_v(F_\mathcal{A})=F_\mathcal{A}$, i.e.~$h_{u,v}=id$. \ref{hid}$\implies$\ref{uv} from~eq.~(\ref{hmap}) this is immediate. \ref{uv}$\implies$\ref{identity} subtracting $\phi(u')=\phi(v')$~from~$\phi(u)=\phi(v)$, we get that $u=\phi(u)-\phi(u')=\phi(v)-\phi(v')=v$, i.e. the complex product of the individual letters is the same,~$u_1\cdot u_2\cdot u_3\dots =v_1\cdot v_2\cdot v_3\dots$. So from~eq.~(\ref{piece}), $F_{u\mathcal{A}}=\phi(u)+u\cdot(F_\mathcal{A}-1)=\phi(v)+v\cdot(F_\mathcal{A}-1)=F_{v\mathcal{A}}$.
\end{proof}

\begin{figure}[H]
\begin{overpic}[width=1\textwidth,tics=10]{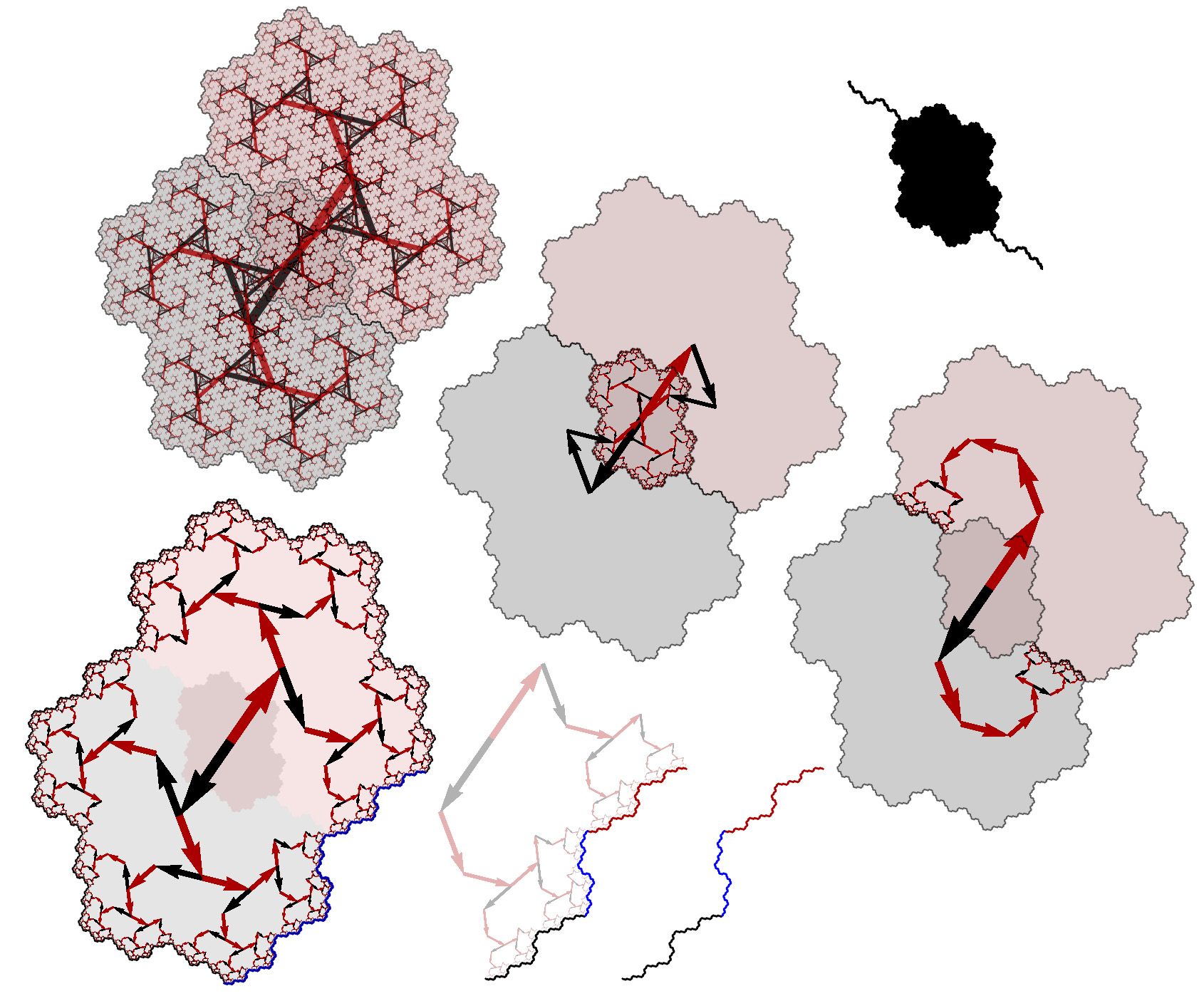}
 \put (62,7) {$L:=f_{212}(L)\cup f_{1222}(L)\cup f_{122}(L)$}
 \put (54,16) {$f_{212}(L)$}
 \put (50,9) {$f_{1222}(L)$}
 \put (45,3) {$f_{122}(L)$}
 \put (70,72) {$L_1$}
 \put (74,61) {$X$}
 \put (82,60) {$L_2$}
  \put (79,74) {$F_1\cap F_2$}
\end{overpic}
\centering
\caption{Binary complex tree $T\{c,-c\}$ where $c\approx 0.737e^{-2.176i}$ is a root of $1+x+x^2-x^3$. Since we have the following tip-to-tip intersections $\phi(11122)=\phi(21121)$, $\phi(11121)=\phi(21122)$, and $\phi(1112)=\phi(2112)$ theorem~6 tells us that the following piece-to-piece exact overlaps take place $F_{11122}=F_{21121}$, $F_{11121}=F_{21122}$. Moreover $F_{1112}=F_{11121}\cup F_{11122}=F_{21121}\cup F_{21122}=F_{2112}$, i.e.~$F_{1112}=F_{2112}$. For this particular example the tipset $F\{c,-c\}$ is the classical Rauzy fractal shown in figure~\ref{treebonacci} with the particularity of having the first-level pieces $F_1$~and~$F_2$ intersecting in an non-empty set $F_1\cap F_2=L_1\cup X\cup L_2$ where $X=F_{1112}=F_{2112}$ and $L_1$ and $L_2$ are given in terms of the fractal curve $L:=f_{212}(L)\cup f_{1222}(L)\cup f_{122}(L)$ as $L_1=f_{22222}(L)$, $L_2=f_{12222}(L)$. 
}
\label{piece1110-0110}
\end{figure}

\section{Related Work}\label{sec: related}

The notion of complex tree introduced here is related to a variety of problems that arise naturally in different fields. For example, the cover picture in Bill Thurston's last paper, \textit{Entropy in dimension one}~\cite{thurston2014entropy}, see also the annotated version with a section of additional remarks added by John Milnor~\cite{bonifant2014frontiers}, shows a set that fits, when restricted to the unit disk, into the root connectivity set~$\mathcal{M}_0$ for the family of binary complex trees $T_\mathcal{A}(c):=T_{\{c,-c\}}$, see figure~\ref{z-zMvsM0}. 
\subsection{Family of Binary Complex Trees $T\{c, -c\}$}\label{z-z}
The set $\mathcal{M}_0$ is the set of roots of all polynomials with coefficients~$\pm1$ which are contained in the unit disk. The connection between $\mathcal{M}_0$ and the set investigated by Thurston was presented in a paper by Tiozzo~\cite{tiozzo2016continuity}. If we start with any point $z\in\mathbb{C}$, the limits of all sequences of compositions of maps $f_1(z)=1+cz$~and~$f_2(z)=1-cz$ allow us to express $F_{\{c,-c\}}$ as the set of power series $1\pm c\pm c^2\pm c^3\pm\dots$. The problem of determining for which parameters~$c$ the tipset $F_{\{c,-c\}}$ is disconnected, i.e.~$Q_{\{c,-c\}}=\emptyset$, has emerged over the last 30~years as an area of interest. As it was nicely put by Calegari et al in a paper recently published in \textit{Ergodic Theory and Dynamical Systems}~\cite{calegari2016roots}, ``The richness and mathematical depth of these various sets has barely begun to be plumbed". The first systematic study about the connectivity of these fractals was carried out by Barnsley and Harrington~in~1985~\cite{barnsley1985mandelbrot}. They defined the set $\mathcal{M}$ shown in figure~\ref{z-zMvsM0}, they proved that the boundary of~$\mathcal{M}$ is contained in~$\{c:1/2\leq|c|\leq1/\sqrt{2}\}$, and among several other observations they noted apparent holes near the boundary~$\partial\mathcal{M}$. This seminal paper was followed by many other results, below we cite some of these results but our list is incomplete. Different authors use different pairs of mappings $f_1(z)$ and $f_2(z)$, we use the one introduced by Baez~\cite{baez2009beauty} that corresponds to our maps associated to complex trees, eq.~(\ref{ifs}) for $c_1=c$~and~$c_2=-c$, i.e.~$f_1(z)=1+cz$~and~$f_2(z)=1-cz$.

\begin{figure}[H]
\centering
  \includegraphics[width=1\textwidth]{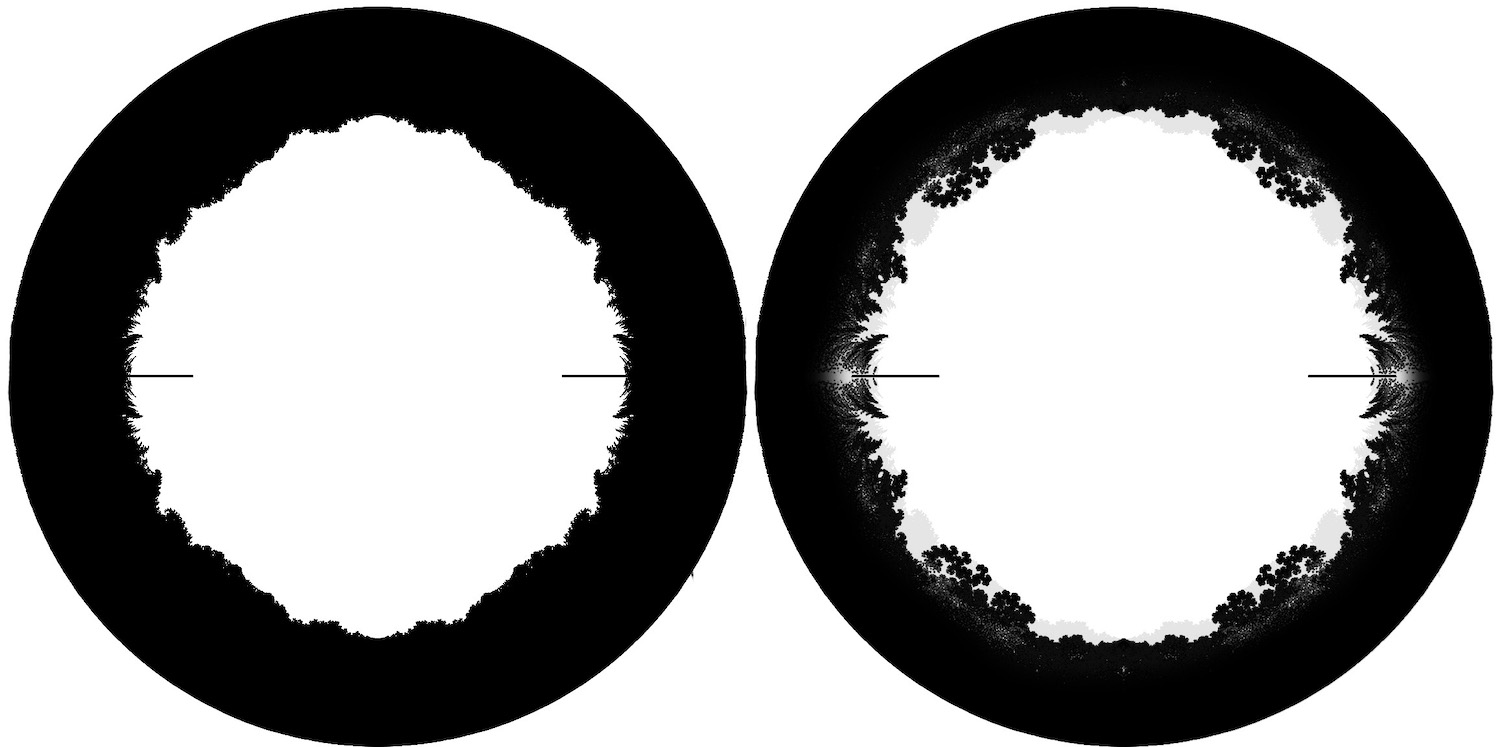}
    \caption{The unstable set $\mathcal{M}=\{c:Q_{\{c,-c\}}\neq\emptyset\}$, and the root connectivity set $\mathcal{M}_0=\{c:0\in F_{\{c,-c\}}\}$.
    }
  \label{z-zMvsM0}
\end{figure}

\noindent In the late 80s Thierry Bousch proved that $\mathcal{M}$~and~$\mathcal{M}_0$ are both connected and locally connected~\cite{bousch1993connexite}. Recall that in 1982 Douady and Hubbard proved that the classical Mandelbrot set is connected but their conjecture about its local connectivity is still open. 
In 2002 Bandt 
presented a fast algorithm for generating high resolution pictures of $\mathcal{M}$ that revealed many interesting features~\cite{bandt2002mandelbrot}. He showed the existence of certain algebraic points in~$\partial\mathcal{M}$ that he called \textit{landmark points}, he proved the existence of a hole in~$\mathcal{M}$, and he conjectured that the interior of~$\mathcal{M}$ is dense away from the real axis. In relation to Bandt's conjecture, Solomyak and Xu~\cite{solomyak2003themandelbrot} proved that the interior is dense in a neighborhood of the imaginary axis. In 2016 Bandt's conjecture was finally proved by Calegari et al~\cite{calegari2016roots} using a new technique that they introduced to construct and certify interior points of~$\mathcal{M}$. They also proved the existence of infinitely many holes in $\mathcal{M}\backslash\mathcal{M}_2$.

\subsection{Family of Symmetric Binary Complex Trees $T\{c, c*\}$}

More or less at the same time when the connectivity locus of self-similar sets~$F_{\{c,-c\}}$ was introduced, Douglas Hardin, at that time a PhD student of Michael Barnsley, studied a closely related family of self-similar sets that corresponds to binary complex trees $T_\mathcal{A}(c):=T_{\{c,c^*\}}$, where $c^*$ is the complex conjugate of~$c$. Among other things, Hardin and Barnsley proved that the boundary of the unstable set~$\mathcal{M}=\{c:Q_{\{c,c^*\}}\neq\emptyset\}$ is piece-wise smooth~\cite{barnsley1989mandelbrot}. This set was explored with improved figures in chapter~8 of Barnsley's book \textit{Fractals Everywhere}. 
The first picture of the root connectivity set~$\mathcal{M}_0=\{c:0\in F_{\{c,c^*\}}\}$ is found in Stephen Wolfram's book \textit{A New Kind of Science}~\cite{wolfram}. Wolfram considered the sets~$W_\lambda:=\{c:\lambda\in F_{\{c,c^*\}}\}$ for different values~$\lambda$ including $\lambda=0$ which corresponds to~$\mathcal{M}_0=W_0$, see figure~\ref{Msetbinary}. Notice that because of the tip-to-root connectivity theorem, we have that for~$\lambda=1=\phi(e_0)$ the resulting \textit{Wolfram set}~$W_\lambda$ is identical to the one obtained for~$\lambda=0$, i.e.~$W_1=W_0=\mathcal{M}_0$.
\begin{figure}[H]
\centering
  \includegraphics[width=1\textwidth]{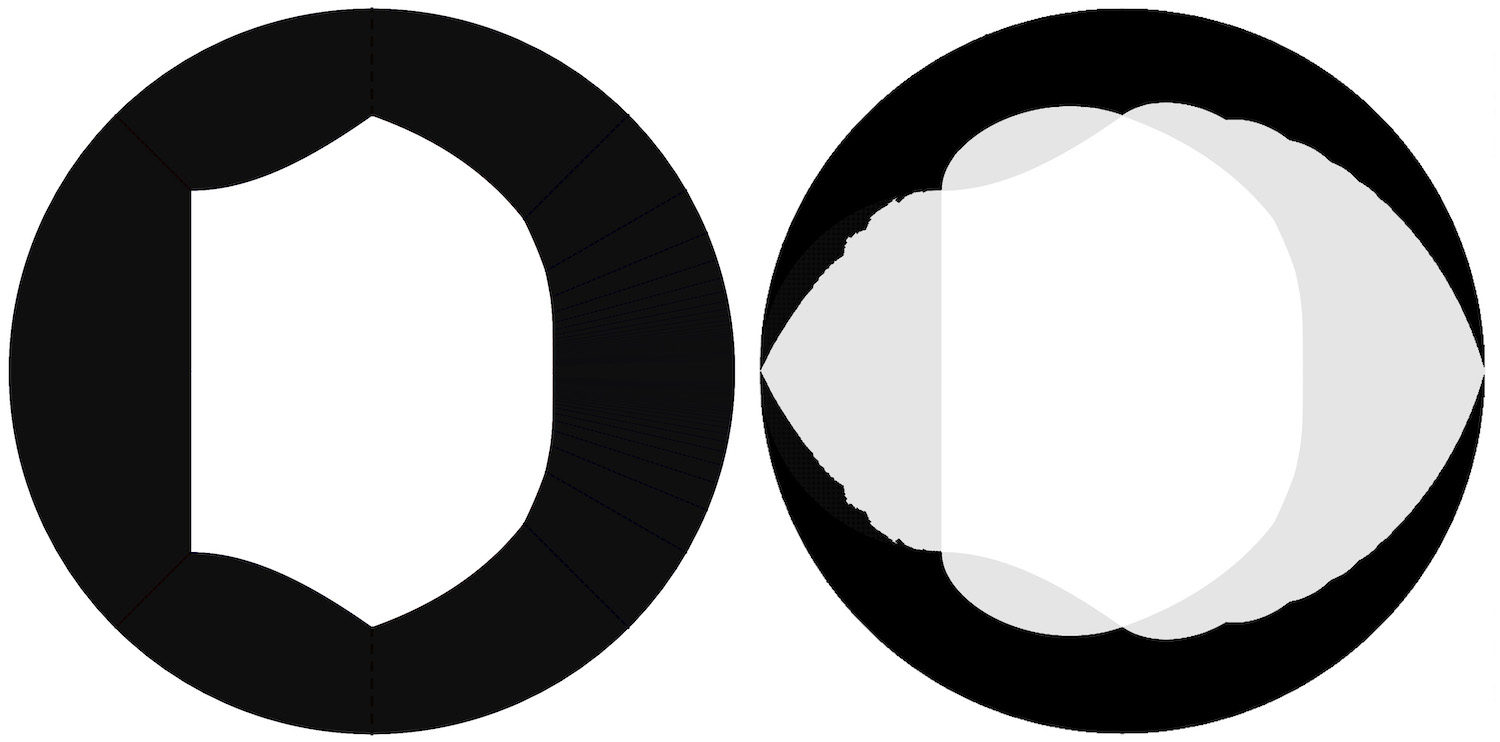}
    \caption{The unstable set $\mathcal{M}=\{c:Q_{\{c,c^*\}}\neq\emptyset\}$, and the root connectivity set $\mathcal{M}_0=\{c:0\in F_{\{c,c^*\}}\}$.
    }
\label{Msetbinary}
\end{figure}

\noindent Mandelbrot and Frame~\cite{mandelbrot1999canopy} studied this family of mirror-symmetric binary trees~$T_{\{c,c^*\}}$ in a purely geometric way, describing the algebraic equations that define $\partial\mathcal{M}$ in terms of~$r(\theta)$, i.e. the scaling ratio $r=|c|$ as a function of the angle $2\theta$ between branches. Tara~Taylor expanded their results by considering several topological properties~\cite{taylor2005computational}~\cite{taylor2007homeomorphism}~\cite{taylor2009topological}~\cite{taylor2009new}. Deheuvels developed analysis on them~\cite{deheuvels2016sobolev}. 
And Pagon~\cite{pagon2003self} suggested that it might be possible to generalize this family of binary trees to $n$-ary trees. In 2013, motivated by this suggestion, the author presented nine families of equations~$r(\theta)$ that parameterize all $n$-ary trees with equally spaced mother branches of same length~$r(\theta)$, symmetric around the real axis, and with tipset \textit{just-touching}~\cite{espigule2013generalized}. When represented in polar coordinates, equations~$r(\theta)$ describe the boundary of unstable sets~$\mathcal{M}$ for each family of mirror-symmetric $n$-ary trees, see figure~\ref{n-aryMset}. The limiting elements of these curves~ $r(\theta)$ generate tipsets with $n$-fold rotational symmetry that are Sierpi\'nski gaskets. These gaskets also appear as special elements of one-parameter families of self-similar sets known as $n$-gon fractals that were first studied by Bandt~and~Hung~\cite{bandt2008fractal}. 
For $n=2$ the family corresponds to~$T_{\{c,-c\}}$ considered in section~\ref{z-z}. For~$n\geq2$ these families of $n$-ary complex trees in one complex variable~$c$ are defined as
\begin{equation}
T_\mathcal{A}(c):=T\{ce^{i2\pi(1/n)},ce^{i2\pi(2/n)},\dots,ce^{i2\pi((n-1)/n)},c\}
\end{equation}
Recent results about $n$-gon fractals include the proof by Himeki and Ishi showing that the unstable set~$\mathcal{M}$ for $n=4$ is regular-closed~\cite{himeki2018mathcal}, and a technique by Calegari~and~Walker~\cite{calegari2018extreme} to obtain extreme points of tipsets for arbitrary~$c$.

\begin{figure}[H]
\centering
  \includegraphics[width=1\textwidth]{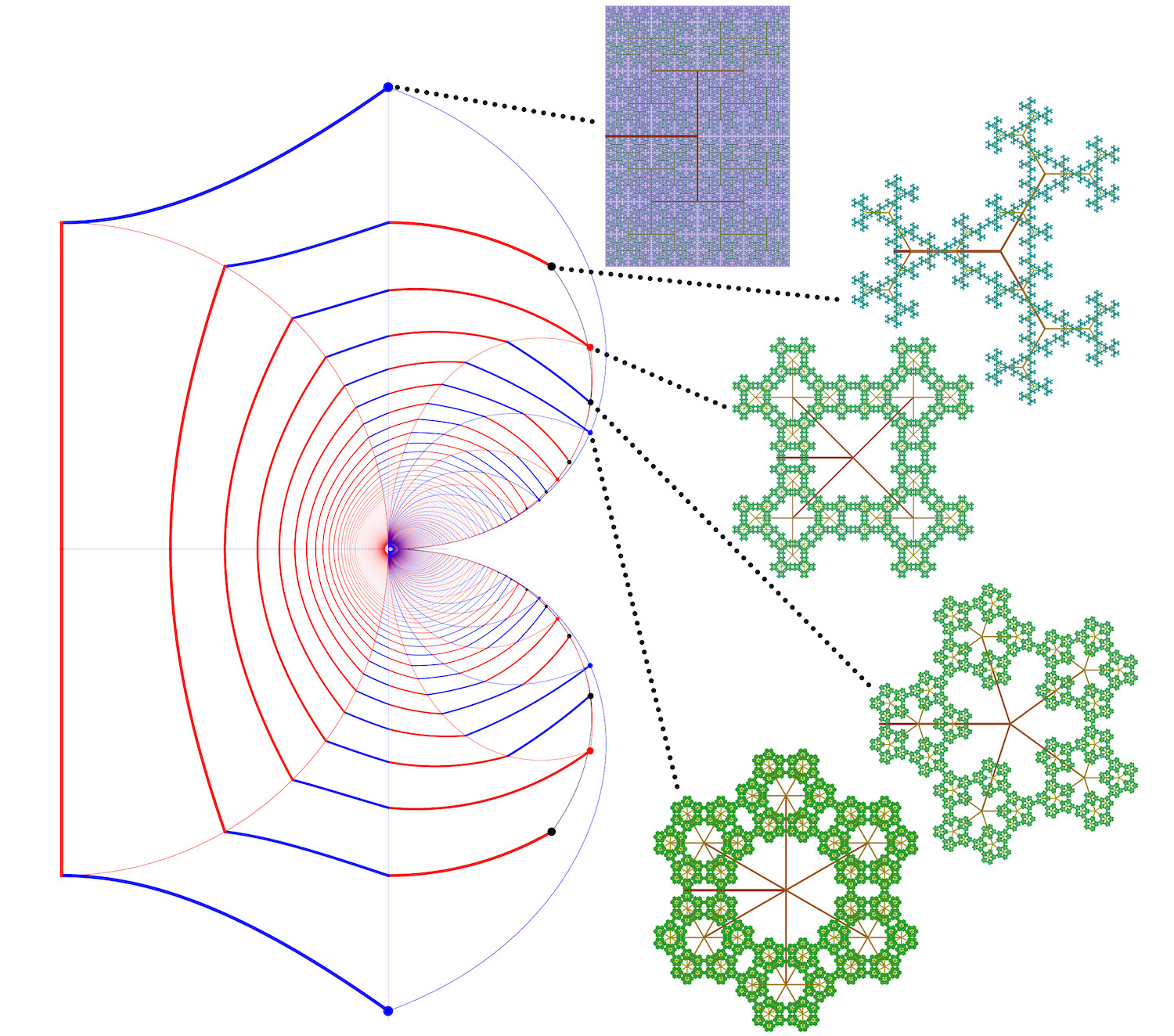}
    \caption{The boundaries of the unstable sets~$\mathcal{M}$ for down $n$-ary symmetric fractal trees~\protect\cite{espigule2013generalized}. On the right, Sierpi\'nski gasket $n$-ary~trees for~$n=2,3,4,5,\text{ and }6$.    
    }
  \label{n-aryMset}
\end{figure} 


\subsection{Polygonal Dendrites}

Normally, tipsets~$F_\mathcal{A}$ of stable complex trees~$T_\mathcal{A}$ are fractal dendrites, but there are exceptions like the examples in figure~\ref{stablegasket} which are not. Nonetheless, self-similar fractal dendrites are topological objects with some unique properties that make them special. Among the possible types of dendrites there is a class known as polygonal dendrites that has been considered recently by Andrey Tetenov, Mary Samuel, and their collaborators~\cite{dendrites2018}. The idea of $\delta$-deformations of polygonal dendrites introduced in~\cite{drozdov2018delta} is being captured by our notion of stable set~$\mathcal{K}$. Many polygonal dendrites can be associated to stable complex trees, for instance, $\delta$-deformations also apply to tipsets of binary complex trees from the families obtained in~(\ref{z 1+z^2}),~(\ref{z z+1/(1+z)}),~and~(\ref{z 1+z+z^2}) as long as the tipset~$F_\mathcal{A}(z)$ is a structurally stable dendrite,~i.e.~$z\in\mathcal{K}$, see figure~\ref{regionsbinary}.

\subsection{Rauzy Fractals}

The ternary tree with alphabet $\mathcal{A}=\{c_1,c_2,c_3\}\approx\{-0.191+0.509 i,-0.420-0.606 i,0.389-0.097 i\}$ is root-connected and unstable because $\phi(2\overline{21})=0$ and $Q_{\{c_1,c_2,c_3\}}=\{12\overline{3}\sim31\overline{3},11\overline{3}\sim22\overline{3},12\overline{21}\sim22\overline{21},\dots\}$ leads to a finite set of numerical solutions which includes~$\mathcal{A}=\{c_1,c_2,c_3\}$. Also notice that the equivalence relation $12\overline{21}\sim22\overline{21}$ does not involve the symbol~3 so it admits a tipset connected binary tree as a subset. Figure \ref{treebonacci} shows how the binary tree $T\{c_1,c_2\}\approx T\{-0.191+0.509 i,-0.420-0.606 i\}$ and its tipset are subsets of the ternary tree~$T\{c_1,c_2,c_3\}$.
The complex-values of the letters $c_1,c_2,c_3$ are Galois conjugates of Pisot numbers: $c_1$ is a root of $-1-x-3x^2+x^3$, $c_2$ is a root of $-1-x-x^2+x^3$, and $c_3$ is a root of $-1+5x-7x^2+x^3$. For the author's surprise, the relation between these two self-similar sets was reported recently in~\cite{sirvent2015fibonacci}. The self-similar set associated to the unstable ternary tree is the classical Rauzy fractal, and the fractal associated to the unstable binary tree is called the Fibonacci-gasket.

\begin{figure}[H]
  \includegraphics[width=4.5in]{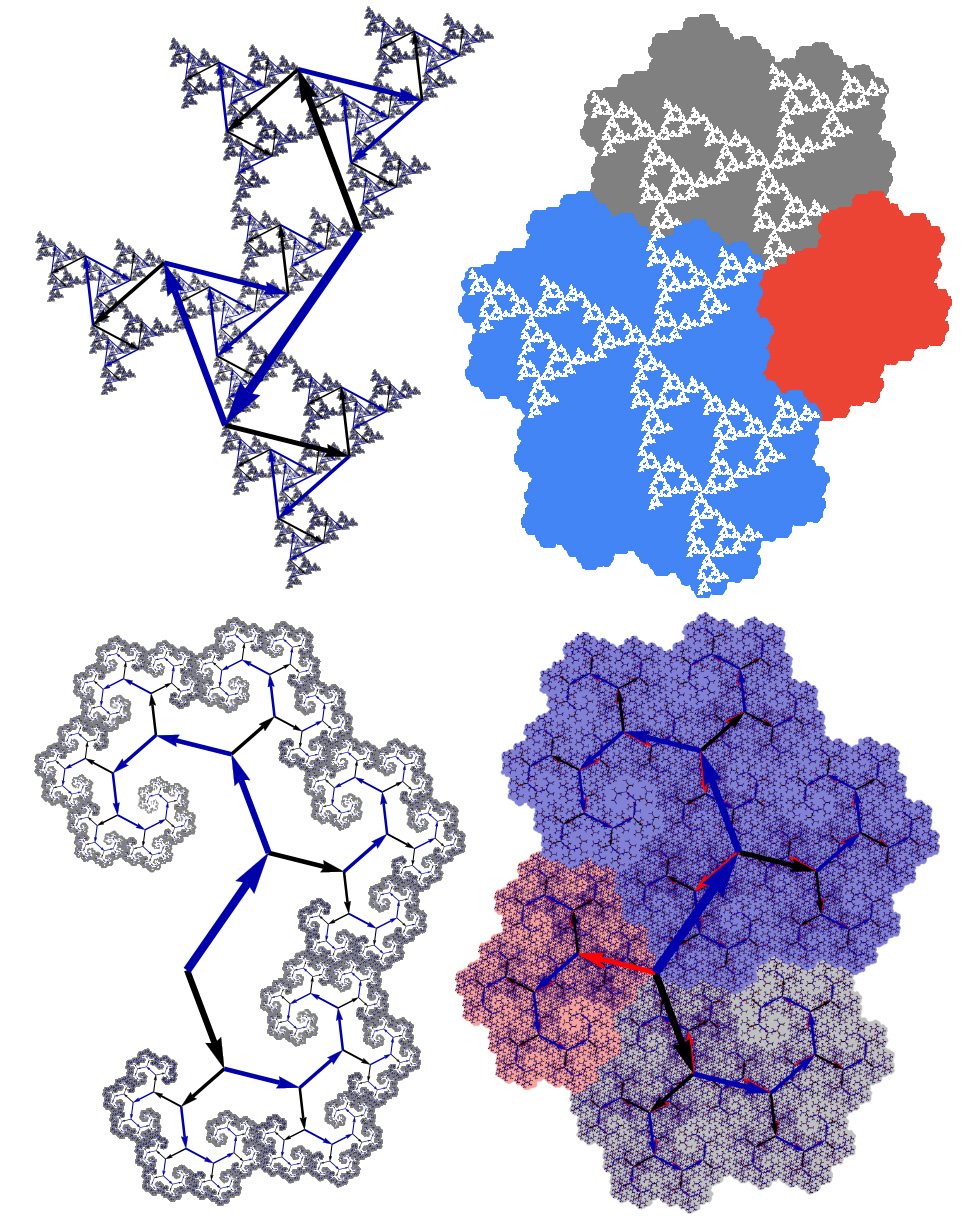}
  \centering
  \caption{Unstable binary tree $T\{c_1,c_2\}\approx T\{-0.191+0.509 i,-0.420-0.606 i\}$ which is a subset of unstable ternary tree $T\{c_1,c_2,c_3\}\approx T\{-0.191+0.509 i,-0.420-0.606 i,0.389-0.097 i\}$.   Their tipset are known as the Fibonacci-gasket and the classical Rauzy fractal, see~\protect\cite{sirvent2015fibonacci}. Below, their reverse trees $T\{-c_1,-c_2\}$ and~$T\{-c_1,-c_2,-c_3\}$ where the tipset for this last ternary tree is also the classical Rauzy fractal.
  }
  \label{treebonacci}
\end{figure}


\section{Concluding Remarks}

The methods to obtain and study one-parameter families of connected self-similar sets from stable complex trees are new. Our techniques also apply to arbitrary one-parameter families~$T_\mathcal{A}(z):=T\{c_1(z),c_2(z),\dots,c_n(z)\}$ like the ones pointed in the previous section which have stable set~$\mathcal{K}:=\{c\in\mathcal{R}:Q_\mathcal{A}(c)=\emptyset\}$. 
In these examples the resulting tipsets for parameters ~$c$ in the stable set~$\mathcal{K}$ are not that interesting from the point of view of analysis because all of these tipsets are disconnected and topologically homeomorphic to a Cantor set. On the other hand if~$T_\mathcal{A}(z)$ is tipset-connected for all~$z\in\mathcal{R}$ and~$\mathcal{K}\neq\emptyset$ then we know how to perform analysis over all the tipsets~$F_\mathcal{A}(z)$ for~$z\in\mathcal{K}$ once analysis has been developed for a single tipset~$F_\mathcal{A}(z)$ for~$z\in\mathcal{K}$. Therefore structurally stable complex trees and their associated region~$\mathcal{K}=\mathcal{R}\backslash\mathcal{M}$ are specially fit for the development of analysis on fractals and we believe that the study of these families is relevant to the current theory pioneered mainly by Jun Kigami~\cite{kigami2001analysis}.
\\

\noindent We have covered a small sample of examples of what is out there, more families of complex trees are being considered in~\cite{espigule2019ternary}~and~\cite{espigule2019Gternary}. We believe that the theoretical basis set in this work will contribute to the study of fractal dendrites~\cite{dendrites2018}~\cite{drozdov2018delta}~\cite{zeller2015branching}, topological spaces admitting a unique fractal structure~\cite{bandt1992topological}~\cite{donoven2016fractal}, and the geometry and dimension of fractals~\cite{bandt2006open}~\cite{falconer2004fractal}~\cite{vass2014geometry}~\cite{vass2018exact}. The notion of complex tree can be extended to hypercomplex spaces~\cite{frongillo2007symmetric}~\cite{bandt2010three}~\cite{espigule2013trees3D}. Future work will involve looking for quasisymmetric conjugacies between connected tipsets of complex trees and Julia sets. This is a direction that has been explored for certain types of self-similar sets~\cite{bandt1991self}~\cite{erouglu2010quasisymmetric}~\cite{kameyama2000julia}. 
 We also believe that the theory of complex dimensions and fractal strings developed by Michel L. Lapidus and Machiel van Frankenhuijsen~\cite{lapidus2012fractal} can be brought into the context of complex trees.
Finally, we think there are some real world applications awaiting for in technology, biology~\cite{benedetti2018engineered}~\cite{dale2014exploration}, physics, and other sciences.

\section*{Acknowledgments}

The author would like to thank Jofre Espigul\'e and J{\'o}zsef Vass for reviewing the preprint and sending valuable feedback to improve it. The author also thanks IMUB's Holomorphic Dynamics group, N\'uria Fagella, Xavier Jarque, Toni Garijo, Robert Florido, and all the other members, for all their support, dedication, feedback and valuable discussions during the author's research project on complex trees 2017-19. This project was partially supported by a research grant awarded by IMUB and the University of Barcelona. Early discussions with Stefano Silvestri and Vince Matsko about this approach using the notion of complex tree proved to be useful, the author would like to thank them for that.
 All figures and diagrams have been done with \textit{Mathematica}. The author thanks Wolfram Research and in particular Theodore Gray, Chris Carlson, Michael Trott, Vitaliy Kaurov, Todd Rowland, and Stephen Wolfram for showing interest on the author's early work, and for being a source of inspiration when it comes to computational experiments. Finally but not least, the author would like to express his gratitude to Susanne Kr\"omker, Pere Pascual, Gaspar Orriols, Warren Dicks, Tara Taylor, Hans Walser, Robert Fathauer, Tom Verhoeff, Michael Barnsley, Przemyslaw Prusinkiewicz, and many others for all their time and encouragement received from them during an early stage of the author's research 2012-2014 from which the present theory elaborates.
 
\bibliographystyle{unsrt}

\end{document}